\documentclass[a4paper,12pt]{article}

\usepackage{slashbox}
\usepackage{authblk}
\usepackage{mathtools}
\usepackage{tikz}
\usetikzlibrary{calc}
\usepackage[T1]{fontenc}
\usepackage[utf8]{inputenc}

\usepackage{amssymb,amsfonts, amsmath,amsthm,epsfig,tabularx}

\usepackage{enumitem}
\usepackage{etoolbox}

\AtBeginEnvironment{proof}{\setcounter{claim}{0}}

\usepackage{float}
\restylefloat{table}

\usepackage[hyperfootnotes=true,colorlinks=true]{hyperref}
\hypersetup{
    colorlinks=true,
    linkcolor=blue,
    citecolor=magenta,      
    urlcolor=black
}

\usepackage{caption} 
\usepackage{subcaption} 

\newtheorem{theorem}{Theorem}

\newtheorem{lemma}[theorem]{Lemma}

\newtheorem*{observationx}{Observation}
\newtheorem{definition}[theorem]{Definition}

\newtheorem{example}[theorem]{Example}
\newtheorem{conjecture}[theorem]{Conjecture}

\newtheorem{problem}[theorem]{Problem}

\DeclareMathOperator{\SmallGroup}{SmallGroup}
\DeclareMathOperator{\Group}{Group}
\DeclareMathOperator{\CVT}{CVT}
\DeclareMathOperator{\Tr}{Tr}



\textheight 207 true mm
\textwidth 167 true mm
\voffset=-11mm
\hoffset=-16mm

\textwidth=16cm
\hoffset=-1.2cm
\voffset=-2.cm
\textheight=23cm

\title{On regular graphs with Šolt{\'e}s vertices}

\author[1,2,3]{Nino Bašić}
\author[4]{Martin Knor}
\author[1,5,6]{Riste \v{S}krekovski}
 
\affil[1]{{\small FAMNIT, University of Primorska, Koper, Slovenia}}
\affil[2]{{\small IAM, University of Primorska, Koper, Slovenia}}
\affil[3]{{\small Institute of Mathematics, Physics and Mechanics, Ljubljana, Slovenia}}
\affil[4]{{\small Slovak University of Technology in Bratislava, Slovakia}}
\affil[5]{{\small Faculty of Mathematics and Physics, University of Ljubljana, Slovenia}}
\affil[6]{{\small Faculty of Information Studies, Novo mesto, Slovenia}}

\begin{document}

\maketitle

\begin{abstract}
Let $W(G)$ be the Wiener index of a graph $G$. We say that a vertex $v \in V(G)$ is a \emph{Šolt{\'e}s vertex} in $G$ if $W(G - v) = W(G)$,
i.e.\ the Wiener index does not change if the vertex $v$ is removed.
In 1991, Šolt{\'e}s posed the problem of identifying all connected graphs $G$ with the property that all vertices of $G$ are Šolt{\'e}s vertices.
The only such graph known to this day is $C_{11}$. As the original problem appears to be too challenging, several relaxations 
were studied: one may look for graphs with at least $k$ Šoltes vertices; or
one may look for \emph{$\alpha$-Šolt{\'e}s graphs}, i.e.\ graphs where the ratio between the number of Šolt{\'e}s
vertices and the order of the graph is at least $\alpha$. Note that the original problem is, in fact, to find all $1$-Šolt{\'e}s graphs.
We intuitively believe that every $1$-Šolt{\'e}s graph has to be regular and
has to possess a high degree of symmetry. Therefore, 
we are interested in \emph{regular} graphs that contain one or more Šolt{\'e}s vertices.
In this paper, we present several partial results.
For every $r\ge 1$ we describe a construction of an infinite family of cubic $2$-connected graphs with at least $2^r$ Šolt{\'e}s vertices.
Moreover, we report that a computer search on publicly available collections of vertex-transitive graphs did not reveal any $1$-Šolt{\'e}s graph.
We are only able to provide examples of large $\frac{1}{3}$-Šolt{\'e}s graphs that are obtained by truncating certain cubic vertex-transitive graphs.
This leads us to believe that no $1$-Šolt{\'e}s graph other than $C_{11}$ exists.

\vspace{0.5\baselineskip}
\noindent
\textbf{Keywords:} Šolt{\'e}s problem, Wiener index, regular graph, cubic graph, Cayley graph, Šolt{\'e}s vertex.

\vspace{0.5\baselineskip}
\noindent
\textbf{Math.\ Subj.\ Class.\ (2020):} 05C12, 05C90, 20B25
\end{abstract}
%
%
\section{Introduction}

All graphs under consideration in this paper are simple and undirected. The \emph{Wiener index} of a graph $G$, denoted by $W(G)$, is defined as
\begin{equation}
W(G) = \frac{1}{2} \sum_{u \in V(G)} \sum_{v \in V(G)} d_G(u, v) = \sum_{\{u,v\} \subseteq V(G)} d_G(u, v),
\label{eq:1}
\end{equation}
where $d_G(u, v)$ is the distance between vertices $u$ and $v$ (i.e.\ the length of a shortest path between $u$ and $v$).
If the graph $G$ is disconnected, we may take $W(G) = \infty$.
The Wiener index was introduced in 1947 \cite{wiener47} and has been extensively studied ever since. For a recent survey on
the Wiener index see \cite{AMCsurvey}. The \emph{transmission} of a vertex $v$ in a graph $G$, denoted by $w_G(v)$, is defined as
$w_G(v) = \sum_{u  \in V(G)} d_G(u, v)$. Note that \eqref{eq:1} can also be expressed as $W(G) = \frac{1}{2} \sum_{u \in V(G)} w_G(u)$.

Let $v \in V(G)$. The graph obtaned from $G$ by removing a vertex $v$ is denoted by $G - v$. If we remove a vertex
$v$ from a graph $G$, any of the following scenarios may occur:
\begin{enumerate}[label=(\alph*)]
\item the Wiener index \emph{decreases} (e.g.\ $W(K_7) = 21$ and $W(K_7 - v) = W(K_6) = 15$);
\item the Wiener index \emph{increases} (e.g.\ $W(\mathrm{Wh}_{9}) = 56$ and $W(\mathrm{Wh}_{9} - v_0) = W(C_8) = 64$, where $\mathrm{Wh}_n$ denotes the
wheel graph on $n$ vertices and $v_0$ is the central vertex of the wheel);
\item the Wiener index \emph{does not change} (e.g.\ $W(\mathrm{Wh}_{8}) = 42$ and $W( \mathrm{Wh}_{8} - v_0) = W(C_7) = 42$, where $v_0$ is the central vertex
of $\mathrm{Wh}_{8}$).
\end{enumerate}
We say that a vertex $v \in V(G)$ is a \emph{Šolt{\'e}s vertex of $G$} if $W(G) = W(G - v)$, i.e.\ the Wiener index of $G$ does not change if the vertex $v$ is removed.
If $G$ is disconnected, with two non-trivial components or with at least three components, then every vertex is a Šolt{\'e}s vertex. Therefore, it is natural to require that $G$ is connected. 
Let
$$
S(G) = \{ v \in V(G) \mid W(G) = W(G - v) \}
$$
and let $0 \leq \alpha \leq 1$.
We say that a graph $G$ is an \emph{$\alpha$-Šolt{\'e}s graph} if $|S(G)| \geq \alpha |V(G)|$, i.e.\ the ratio between the
number of Šolt{\'e}s vertices of $G$ and the order of $G$ is at least $\alpha$. 
For example, the graph in Figure~\ref{fig:smallest_cubic_third} is the smallest cubic $\frac{1}{3}$-Šolt{\'e}s graph.
Note that $G$ is an $1$-Šolt{\'e}s graph if every vertex in $G$
is a Šolt{\'e}s vertex. The only $1$-Šolt{\'e}s graph known to this day is the cycle on $11$ vertices, $C_{11}$. In this paper, Šolt{\'e}s graph is simply the
synonym for $1$-Šolt{\'e}s graph.

\begin{figure}[!b]
\centering
\begin{tikzpicture}[scale=1.5,rotate=90]
\tikzstyle{edge}=[draw,thick]
\tikzstyle{every node}=[draw, circle, fill=blue!50!white, inner sep=1.5pt]
\node[fill=red!70!white] (v4) at (-1,-0.0) {};
\node[fill=red!70!white] (v10) at (1,-0.0) {};
\node[fill=red!70!white] (v8) at (-1,1) {};
\node[fill=red!70!white] (v11) at (1,1) {};
\node (v9) at (-0.6,-1.6) {}; 
\node (v7) at (0.6,-1.6) {};
\node[fill=red!70!white] (v16) at (-0.6,-0.6) {};
\node[fill=red!70!white] (v17) at (0.6,-0.6) {};
\node (v18) at (-0.6,-1) {};
\node (v19) at (0.6,-1) {};
\path[edge] (v18) -- (v19);
\path[edge] (v10) -- (v17) -- (v19) -- (v7);
\path[edge] (v4) -- (v16) -- (v18) -- (v9);
\path[edge] (v16) -- (v17);
\node (v0) at (0,-1.3) {};
\node (v5) at (0,-1.9) {};
\path[edge] (v9) -- (v5) -- (v7); 
\path[edge] (v0) -- (v5); 
\path[edge] (v9) -- (v0) -- (v7); 
\node[fill=red!70!white] (v20) at (-0.6,0.6+1) {};
\node[fill=red!70!white] (v21) at (0.6,0.6+1) {};
\node (v22) at (-0.6,1+1) {};
\node (v23) at (0.6,1+1) {};
\node (v3) at (-0.6,{1.6+1}) {};
\node (v6) at (0.6,{1.6+1}) {};
\node (v2) at (0,{1.3+1}) {};
\node (v1) at (0,{1.9+1}) {};
\path[edge] (v1) -- (v2); 
\path[edge] (v3) -- (v1) -- (v6) -- (v2) -- (v3); 
\node (v12) at (-0.3,0.8) {};
\node (v13) at (0.3,0.8) {};
\node (v14) at (-0.3,0.2) {};
\node (v15) at (0.3,0.2) {};
\path[edge] (v3) -- (v22) -- (v20) --  (v8) -- (v4); 
\path[edge] (v6) -- (v23) -- (v21) -- (v11) -- (v10); 
\path[edge] (v12) -- (v13) -- (v15) -- (v14) -- (v12); 
\path[edge] (v12) -- (v8);
\path[edge] (v14) -- (v4); 
\path[edge] (v13) -- (v11);
\path[edge] (v15) -- (v10); 
\path[edge] (v22) -- (v23); 
\path[edge] (v20) -- (v21); 
\end{tikzpicture}
\caption{The smallest cubic $\frac{1}{3}$-Šolt{\'e}s graph has $24$ vertices. Its Šolt{\'e}s vertices are coloured red.}
\label{fig:smallest_cubic_third}
\end{figure}
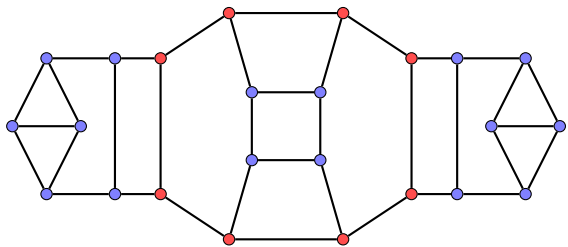

The Šolt{\'e}s problem \cite{soltes} was forgotten for nearly three decades. It was revived and popularised in 2018 by Knor~\emph{et al.}~\cite{Majst-1}.
They considered a relaxation of the original problem: one may look for graphs with
a prescribed number of Šolt{\'e}s vertices. They
showed that there exists a unicyclic graph on $n$ vertices with \emph{at least one} Šolt{\'e}s vertex for every $n \geq 9$. They also showed that there exists a unicyclic
graph with a cycle of length $c$ and at least one Šolt{\'e}s vertex for every $c \geq 5$, and that every graph is an induced subgraph of some larger graph with a Šolt{\'e}s vertex.
They have further shown that a Šolt{\'e}s vertex in a graph may have a prescribed degree~\cite{Majst-2}. Namely, they proved that for any $d \geq 3$
there exist infinitely many graphs with a Šolt{\'e}s vertex of degree $d$. Necessary conditions for the existence of Šolt{\'e}s vertices
in Cartesian products of graphs were also considered~\cite{Majst-2}.
In 2021, Bok~\emph{et al.}~\cite{Bok} showed that for every $k \geq 1$ there exist infinitely many \emph{cactus graphs} with exactly $k$ distinct Šolt{\'e}s vertices.
In 2021, Hu \emph{et al.}~\cite{Hu2021} studied a variation of the problem and showed that there exist infinitely many graphs where the Wiener index 
remains the same even if $r \geq 2$ distinct vertices are removed from the graph.

Akhmejanova \emph{et al.}~\cite{akh} considered another possible relaxation of the problem: Do there exist graphs with a given percentage of Šolt{\'e}s
vertices?
They constructed two infinite families of graphs with a relatively high proportion of Šolt{\'e}s vertices.
Their first family comprises graphs $B(k)$, $k \geq 2$, where $B(k)$ is a $\frac{2k}{5k+6}$-Šolt{\'e}s graph on $5k+6$ vertices.
Two vertices of $B(k)$ are of degree $k + 1$, while the remaining vertices are of degree $2$. The percentage of Šolt{\'e}s vertices is below $\frac{2}{5}$,
but tends to $\frac{2}{5}$ as $k$ goes to infinity. They also introduced a two-parametric infinite family $L(k, m)$, $m \geq 7$ and $k \geq \frac{m-3}{m-6}$.
Here, the percentage of Šolt{\'e}s vertices is below $\frac{1}{2}$, but tends to $\frac{1}{2}$ as $k$ goes to infinity for a fixed $m$. These graphs
contain at least one leaf, at least $km$ vertices of degree $2$, and a vertex of degree $km + 1$.

In the present paper we focus on \emph{regular graphs}.
Our intuition lead us to believe that the solutions to the original Šolt{\'e}s problem should be graphs having all vertices of the same degree.

\begin{conjecture}
If $G$ is a Šolt{\'e}s graph, then $G$ is regular.
\end{conjecture}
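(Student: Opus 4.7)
The plan is to argue by contradiction: assume $G$ is a 1-Šolt{\'e}s graph that is not regular. A useful preliminary observation is that $G$ must be 2-connected. Indeed, if $v$ were a cut vertex, then $G - v$ would be disconnected and hence $W(G-v) = \infty \neq W(G)$, contradicting $v \in S(G)$. So the conjecture reduces to excluding irregular, 2-connected Šolt{\'e}s graphs.

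The next step is to rewrite the Šolt{\'e}s condition as a local balance equation. For $v \in V(G)$, set
$$\Delta_G(v) = \sum_{\{x,y\} \subseteq V(G) \setminus \{v\}} \bigl( d_{G-v}(x,y) - d_G(x,y) \bigr),$$
which is nonnegative since vertex deletion cannot decrease distances. A short calculation gives $W(G - v) - W(G) = \Delta_G(v) - w_G(v)$, so the Šolt{\'e}s property at $v$ is equivalent to the identity $w_G(v) = \Delta_G(v)$. The conjecture thus becomes the statement that this identity cannot hold simultaneously at every vertex of an irregular 2-connected graph.

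I would then attempt to derive a contradiction by comparing the two sides of the balance equation at vertices of extremal degree. Let $\delta < D$ denote the minimum and maximum degrees. The standard bound
$$w_G(v) \geq 2(n-1) - \deg(v)$$
(each neighbour contributes at least $1$, each non-neighbour at least $2$) shows that $w_G(v)$ tends to be \emph{larger} at low-degree vertices. Conversely, $\Delta_G(v)$ measures how indispensable $v$ is to the geodesic structure of $G$, and one expects it to be \emph{larger} at high-degree vertices, which lie on many shortest paths. If one could turn this heuristic into a rigorous monotonicity statement, ideally of the form \textit{$w_G(v) - \Delta_G(v)$ is strictly decreasing in $\deg(v)$}, or some averaged variant thereof, then the equation $w_G(v) = \Delta_G(v)$ could not hold simultaneously at a vertex of degree $\delta$ and one of degree $D$. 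A promising companion identity is obtained by summing the balance equations over all $v$: one gets $2W(G) = \sum_v \Delta_G(v)$, and rewriting the right-hand side as $\sum_{\{x,y\}} \sum_{v \neq x,y} (d_{G-v}(x,y) - d_G(x,y))$ may yield a global invariant that is sensitive to the degree sequence.

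The main obstacle, I expect, is precisely the lack of a clean upper bound on $\Delta_G(v)$ in terms of $\deg(v)$ alone: $\Delta_G(v)$ depends on how many shortest paths are \emph{forced} through $v$ and on the length of alternative routes in $G - v$, quantities that can vary wildly among graphs with similar degree sequences. Reasonable ways to attack this include: (i) exploiting 2-connectivity via an ear decomposition rooted at a minimum-degree vertex to quantitatively control the alternative geodesics in $G - v$; (ii) treating first the \emph{almost regular} case in which only a bounded number of vertices differ from the common degree, where the imbalance should be concentrated and therefore detectable; and (iii) using a weighted averaging argument, summing $w_G(v) - \Delta_G(v)$ against a carefully chosen degree-dependent measure, to force a nonzero sign whenever the degree sequence is not constant. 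Even partial progress along any of these lines, for instance ruling out Šolt{\'e}s graphs with a single vertex of degree different from the rest, would already be a meaningful step toward the conjecture.
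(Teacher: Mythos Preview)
The statement you are attempting to prove is labelled a \emph{Conjecture} in the paper, and the paper offers no proof of it; it is stated as an open problem motivated by intuition and computational evidence. So there is no ``paper's own proof'' to compare against.

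Your proposal is likewise not a proof: it is a research outline with explicitly acknowledged gaps, and you are candid about this. The preliminary observations are correct (the $2$-connectivity argument and the identity $W(G-v)-W(G)=\Delta_G(v)-w_G(v)$ are both fine), but the heart of the plan rests on a heuristic that is not reliable. The claim that $\Delta_G(v)$ ``tends to be larger at high-degree vertices'' is false in many natural examples: a high-degree vertex whose neighbourhood is well connected internally may lie on essentially no geodesics, so that $\Delta_G(v)=0$, while a degree-$2$ vertex sitting in the middle of a long induced path can have $\Delta_G(v)$ of cubic order in the diameter. Indeed, the paper's own constructions exploit exactly this: the \v{S}olt{\'e}s vertices $u_1,u_2$ in $G_t$ have degree $3$ yet $\Delta_{G_t}(u_i)$ grows like $t^3$. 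Any monotonicity of $w_G(v)-\Delta_G(v)$ in $\deg(v)$, even in an averaged sense, would have to overcome this.

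In short: the conjecture remains open, your setup is sound, but the proposed mechanism for deriving a contradiction is not one that can be made rigorous without substantial new ideas, and the paper does not supply them either.
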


For a general regular graph $G$, the values $W(G - u)$ and $W(G - v)$ might be significantly different
for two different vertices $u, v \in V(G)$; it may happen that removal of one vertex increases the Wiener index,
while removal of the other vertex descreases the Wiener index.
However, $W(G - u)$ and $W(G - v)$ are equal if vertices
$u$ and $v$ belong to the same vertex orbit. Therefore, we believe that a Šolt{\'e}s graph
is likely to be vertex transitive.

\begin{conjecture}
If $G$ is a Šolt{\'e}s graph, then $G$ is vertex transitive.
\end{conjecture}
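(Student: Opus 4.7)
Since Conjecture~2 is open, my proof plan is necessarily speculative. The approach would exploit the fact that the Šolt{\'e}s condition, written at each vertex $v$ as
\[
w_G(v) \;=\; \sum_{\{u,w\} \subseteq V(G)\setminus\{v\}} \bigl( d_{G-v}(u,w) - d_G(u,w) \bigr),
\]
is extraordinarily restrictive: the left side is the transmission of $v$, while the right side totals the lengthening of geodesics when $v$ is removed (and is non-negative term by term). The plan is to first extract metric regularity from the simultaneous validity of this identity at all vertices, and then upgrade that regularity to an explicit transitive subgroup of $\Aut(G)$.

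For the first stage, I would attempt to show that a Šolt{\'e}s graph must be transmission regular, and moreover distance-degree regular (the sequence $(|\{u : d_G(v,u) = i\}|)_{i \geq 0}$ is independent of $v$). Combined with Conjecture~1 this would push $G$ close to being distance regular. A further natural target is to show that the cards $G-v$ are pairwise isomorphic; this does not automatically imply vertex transitivity, but such combinatorial rigidity makes non-transitive candidates scarce and often reducible to small exceptional cases that could be handled by direct inspection.

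For the second stage, I would search for a concrete source of automorphisms. One route is spectral: cospectrality of the cards $G-v$, combined with the metric rigidity obtained in the first stage, is known to constrain $G$ severely and could be combined with results on graphs determined by their spectrum. Another route is purely combinatorial: use the equality of the right-hand side above at two vertices $u$ and $v$ to match geodesics through $u$ bijectively with geodesics through $v$, and try to promote this matching to a graph automorphism $\sigma$ with $\sigma(u)=v$, perhaps by induction on the distance levels around $u$.

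The hard part — and the reason this remains a conjecture — is the jump from Wiener-type invariants of the deleted cards (one integer per card) to a genuinely transitive group action. Since distance-regular graphs are well known not to be vertex-transitive in general, metric information alone cannot suffice; any successful proof must either exploit some structural feature specific to the Šolt{\'e}s equation beyond what distance-regularity provides, or else proceed by classification once Conjecture~1 and the derived distance-degree constraints have sufficiently restricted the candidate graphs. Bridging the gap between a single scalar constraint per vertex and the rich group-theoretic demands of vertex transitivity is where the principal obstruction lies.
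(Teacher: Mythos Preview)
The statement you are addressing is presented in the paper as an open \emph{conjecture}; the paper offers no proof and does not claim one. There is therefore no ``paper's own proof'' to compare your proposal against. You correctly acknowledge this at the outset, and your text is explicitly a speculative plan rather than a claimed proof.

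As a heuristic outline your proposal is sensible. The identity you record,
\[
w_G(v) \;=\; \sum_{\{u,w\}\subseteq V(G)\setminus\{v\}} \bigl(d_{G-v}(u,w)-d_G(u,w)\bigr),
\]
is a correct reformulation of the Šolt{\'e}s condition $W(G)=W(G-v)$, and the termwise nonnegativity on the right is genuine. Your two-stage programme (first extract metric regularity such as transmission regularity and distance-degree regularity, then try to promote this to a transitive automorphism group) is a natural line of attack, and you are right to flag that distance-regularity alone cannot suffice, since there exist distance-regular graphs that are not vertex-transitive. That observation is precisely the obstruction: a single scalar equation per vertex is far weaker than the orbit-equivalence demanded by vertex transitivity, and nothing in your outline bridges that gap. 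The paper itself offers only computational evidence (no counterexamples among vertex-transitive graphs of small order, and some $\tfrac{1}{3}$-Šolt{\'e}s examples arising from Cayley graphs) in support of the conjecture, not a proof strategy; your outline goes somewhat further in articulating where a proof would have to find new leverage, but it remains, as you say, speculative.
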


Among truncations of cubic vertex-transitive graphs we found several $\frac{1}{3}$-Šolt{\'e}s graphs; see Section~\ref{sec:concluding}.
Interestingly, all our examples are in fact Cayley graphs and this leads us to pose the following conjecture.

\begin{conjecture}
If $G$ is a Šolt{\'e}s graph, then $G$ is a Cayley graph.
\end{conjecture}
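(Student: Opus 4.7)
Since the statement is a conjecture rather than a theorem whose proof I am about to reconstruct, my plan is to sketch plausible avenues and indicate where each one runs aground. The natural reduction is through the preceding two conjectures: once we assume $G$ is regular and vertex transitive, the problem reduces by Sabidussi's theorem to showing that $\Aut(G)$ contains a regular subgroup, equivalently, that $G$ is \emph{not} one of the non-Cayley vertex-transitive graphs. So the genuine content of the conjecture is that the Šolt{\'e}s property rules out exactly the non-Cayley vertex-transitive case.

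A first step is to extract what the Šolt{\'e}s condition buys us in the vertex-transitive setting. Let $n = |V(G)|$ and write $w$ for the common transmission $w_G(v)$, so $W(G) = nw/2$. Set
\[
\delta(v) = \sum_{\{u,z\}\subseteq V(G)\setminus\{v\}} \bigl( d_{G-v}(u,z) - d_G(u,z) \bigr),
\]
the total inflation of distances caused by deleting $v$. Then $W(G-v) = W(G) - w + \delta(v)$, and the Šolt{\'e}s condition collapses to the single scalar identity $\delta(v) = w$, which by vertex-transitivity holds for all $v$ or for none. The real content of the conjecture is therefore that this rigid numerical balance, coupled with vertex-transitivity, forces a regular subgroup inside $\Aut(G)$.

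Secondly, I would attack the non-Cayley side both experimentally and structurally. On the experimental side, exhaustive catalogues of small vertex-transitive graphs can be filtered for the Šolt{\'e}s property, extending the computer searches the authors already report. On the structural side, the classical non-Cayley vertex-transitive graphs (the Petersen graph, the Coxeter graph, the doubled Odd graphs, certain generalised Petersen graphs) share features such as primitive or $2$-arc-transitive automorphism groups. I would try to exploit this strong transitivity to pin down $\delta(v)$ via the orbit structure of the stabiliser $\Aut(G)_v$ on the BFS layers around $v$, and look for a numerical invariant that separates the Cayley case from the non-Cayley one.

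The principal obstacle is that no structural bridge is known between the scalar equality $\delta(v) = w$ and the existence of a regular subgroup of $\Aut(G)$; the conjecture is at present supported by empirical evidence rather than by structural heuristics. A single scalar equation is unlikely to suffice on its own to exclude an entire class of highly symmetric graphs, so any genuine proof will presumably proceed by first establishing quantitative bounds on $\delta(v)$ in terms of girth, diameter, and the number of geodesics through $v$, and then showing that the non-Cayley families violate them. A realistic intermediate goal is therefore to verify the conjecture for all vertex-transitive graphs up to a substantially larger order, and for the classical infinite non-Cayley families, before attempting a uniform proof.
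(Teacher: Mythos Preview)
Your treatment is appropriate: the statement is a \emph{conjecture}, and the paper offers no proof of it whatsoever. The authors state it purely on the basis of empirical evidence---namely, that the seven cubic vertex-transitive graphs $G$ they found with $\Tr(G)$ a $\tfrac{1}{3}$-Šolt{\'e}s graph all happen to be Cayley graphs---and they make no attempt at a structural argument. So there is no ``paper's own proof'' against which to compare your proposal.

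Your exploratory sketch (the reduction via Sabidussi, the scalar identity $\delta(v)=w$, and the plan to test the classical non-Cayley families) goes well beyond anything the paper attempts, and you are right to flag that no bridge is currently known between the distance-inflation identity and the existence of a regular subgroup in $\Aut(G)$. One small caution: the paper's evidence is actually weaker than you may be assuming, since the authors found no $1$-Šolt{\'e}s graph other than $C_{11}$ at all; the Cayley observation concerns only $\tfrac{1}{3}$-Šolt{\'e}s truncations, so the conjecture is at present an extrapolation from a related but distinct phenomenon rather than from direct instances.
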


It is not hard to obtain small examples of regular graphs with Šolt{\'e}s vertices.
We used the \texttt{geng} \cite{nauty} software to generate small $k$-regular graphs (for $k = 3, 4$ and $5$).
Let $\mathcal{R}^r$ denote the class of all $r$-regular graphs and let $\mathcal{R}^r_n$ denote the set of $r$-regular graphs on $n$ vertices. Let $N(\mathcal{G}, k)$ be the number of graphs in the class $\mathcal{G}$ that contain exactly $k$ Šolt{\'e}s vertices.

Table~\ref{tbl:1} shows the numbers of (non-isomorphic) cubic graphs of orders $n \leq 24$ that contain Šolt{\'e}s vertices.
We can see that cubic graphs of order $12$ or less do not contain Šolt{\'e}s vertices. There are plenty of examples with one
Šolt{\'e}s vertex. Cubic graphs with two Šolt{\'e}s vertices first appear at order $n = 14$; there are three such graphs (see Figure~\ref{fig:small_examples}(a)--(c)).
Examples with three and four Šolt{\'e}s vertices appear at order $n = 16$; there is one cubic graph with three and two cubic graphs with four Šolt{\'e}s vertices
(see Figure~\ref{fig:small_examples}(d)--(f)).
At order $n = 18$, there are no graphs with three Šolt{\'e}s vertices, however there is only one graph with four Šolt{\'e}s vertices (see Figure~\ref{fig:small_examples}(g)).
Numbers of $4$-regular and $5$-regular graphs with respect to their number of Šolt{\'e}s vertices are given in Tables~\ref{tbl:2} and~\ref{tbl:3}, respectively.

\begin{table}[!htb]
\centering
\caption{The numbers of (non-isomorphic) cubic graphs with Šolt{\'e}s vertices.
There are no graphs with Šolt{\'e}s vertices for orders up to $12$.
The column labeled $|\mathcal{R}^3_n|$ gives the total number of cubic graphs of order $n$.
Symbol `-' is a replacement for $0$ (i.e.\ no such graph exists).
Each of the next columns gives the numbers of graphs with $k$ Šolt{\'e}s vertices for $k = 1, 2, \ldots, 8$.
A blue-coloured number means that we have provided drawings of these graphs (see Figures~\ref{fig:smallest_cubic_third} and~\ref{fig:small_examples}).}
\label{tbl:1}
$
\begin{array}{r|r||r|r|r|r|r|r|r|r}
 &  & \multicolumn{8}{c}{ N(\mathcal{R}^3_n, k)} \\
\cline{3-10} 
n & |\mathcal{R}^3_n| & k=1 &  k =2 & k = 3 & k = 4 & k = 5 &k = 6 &k = 7 & k =8 \\
\hline \hline
\leq 12 & 112 & \text{-} & \text{-} & \text{-} & \text{-} & \text{-} & \text{-} & \text{-} & \text{-} \\
14 & 509 & 4 & {\color{blue}\mathbf{3}} & \text{-} & \text{-} & \text{-} & \text{-} & \text{-} & \text{-} \\
16 & 4060 & 108 & 37 & {\color{blue}\mathbf{1}} &{\color{blue}\mathbf{2}} & \text{-} & \text{-} & \text{-} & \text{-} \\
18 & 41301 & 1014 & 200 & \text{-} &  {\color{blue}\mathbf{1}} & \text{-} & \text{-} & \text{-} & \text{-} \\
20 & 510489 & 13460 & 1076 & 6 & 13 & \text{-} & \text{-} & \text{-} & \text{-} \\
22 & 7319447 & 194432 & 9610 & 151 & 52 & \text{-} & 1 & \text{-} & \text{-} \\
24 & 117940535 & 3161124 & 130087 & 2596 & 333 & 2 & 3 & \text{-} & {\color{blue}\mathbf{1}} \\
\end{array}
$
\end{table}


\begin{table}[!htb]
\centering
\caption{The numbers of (non-isomorphic) quartic graphs of orders up to $17$ with Šolt{\'e}s vertices. Naming conventions
used in Table~\ref{tbl:1} also apply here.}
\label{tbl:2}
$
\begin{array}{r|r||r|r|r|r}
 &  & \multicolumn{4}{c}{ N(\mathcal{R}^4_n, k)} \\
\cline{3-6} 
n & |\mathcal{R}^4_n| & k = 1 & k = 2 & k = 3 & k = 4 \\
\hline \hline
\leq 12 & 1894 & \text{-} & \text{-} & \text{-} & \text{-} \\
13 & 10778 & \text{-} & {\color{blue}\mathbf{1}} & \text{-} & \text{-} \\
14 & 88168 & 30 & 6 & \text{-} & \text{-} \\
15 & 805491 & 265 & 85 & \text{-} & 5 \\
16 & 8037418 & 2191 & 472 & \text{-} & \text{-} \\
17 & 86221634 & 14430 & 2097 & 4 & 1 \\
\end{array}
$
\end{table}

\begin{table}[!htb]
\centering
\caption{The numbers of (non-isomorphic) quintic graphs of order up to $14$ with Šolt{\'e}s vertices. Naming conventions
used in Table~\ref{tbl:1} also apply here.}
\label{tbl:3}
$
\begin{array}{r|r||r|r|r|r}
 &  & \multicolumn{4}{c}{ N(\mathcal{R}^5_n, k)} \\
\cline{3-6} 
n & |\mathcal{R}^5_n| & k = 1 & k = 2 & k = 3 & k = 4 \\
\hline \hline
\leq 12 & 7912 & \text{-} & \text{-} & \text{-} & \text{-} \\
14 & 3459383 & 8 & 3 & \text{-}  & \text{-}  \\
\end{array}
$
\end{table}

\begin{figure}[!htb]
\centering
\subcaptionbox{\label{subfig:a}}
{\begin{tikzpicture}[scale=1.2]
\tikzstyle{edge}=[draw,thick]
\tikzstyle{every node}=[draw, circle, fill=blue!50!white, inner sep=1.5pt]
\node (v7) at (-0.7,1.4) {};
\node (v8) at (0.7,1.4) {};
\node (v1) at (0,1) {};
\node (v9) at (0,0.5) {};
\node (v2) at (-1,0.5) {};
\node (v3) at (1,0.5) {};
\path[edge] (v2) -- (v7) -- (v8) -- (v3); 
\path[edge] (v7) -- (v1) -- (v8); 
\path[edge] (v1) -- (v9); 
\path[edge] (v2) -- (v9) -- (v3); 
\node[fill=red!70!white] (v10) at (-1.2,-0.3) {};
\node[fill=red!70!white] (v12) at (1.2,-0.3) {};
\node (v0) at (-0.6,-0.4) {};
\node (v6) at (-0.6,-1.1) {};
\node (v11) at (0,-0.4) {};
\node (v13) at (0,-1.1) {};
\node (v4) at (0.6,-0.4) {};
\node (v5) at (0.6,-1.1) {};
\path[edge] (v2) -- (v10) -- (v6) -- (v13) -- (v5) -- (v12) -- (v3); 
\path[edge] (v10) -- (v0) -- (v11) -- (v4) -- (v12); 
\path[edge] (v0) -- (v6); 
\path[edge] (v11) -- (v13); 
\path[edge] (v5) -- (v4); 
\end{tikzpicture}}
\quad %
\subcaptionbox{\label{subfig:b}}
{\begin{tikzpicture}[scale=1.2]
\tikzstyle{edge}=[draw,thick]
\tikzstyle{every node}=[draw, circle, fill=blue!50!white, inner sep=1.5pt]
\node (v7) at (-0.7,1.4) {};
\node (v8) at (0.7,1.4) {};
\node (v1) at (0,1) {};
\node (v9) at (0,0.5) {};
\node (v2) at (-1,0.5) {};
\node (v3) at (1,0.5) {};
\path[edge] (v2) -- (v7) -- (v8) -- (v3); 
\path[edge] (v7) -- (v1) -- (v8); 
\path[edge] (v1) -- (v9); 
\path[edge] (v2) -- (v9) -- (v3); 
\node[fill=red!70!white] (v10) at (-1.2,-0.3) {};
\node[fill=red!70!white] (v12) at (1.2,-0.3) {};
\node (v0) at (-0.6,-0.4) {};
\node (v6) at (-0.6,-1.1) {};
\node (v11) at (0,-0.4) {};
\node (v13) at (0,-1.1) {};
\node (v4) at (0.6,-0.4) {};
\node (v5) at (0.6,-1.1) {};
\path[edge] (v2) -- (v10) -- (v6) -- (v13) -- (v5) -- (v12) -- (v3); 
\path[edge] (v10) -- (v0) -- (v11) -- (v4) -- (v12); 
\path[edge] (v0) -- (v6); 
\path[edge] (v11) -- (v5); 
\path[edge] (v13) -- (v4); 
\end{tikzpicture}}
\quad %
\subcaptionbox{\label{subfig:c}}
{\begin{tikzpicture}[scale=1.2]
\tikzstyle{edge}=[draw,thick]
\tikzstyle{every node}=[draw, circle, fill=blue!50!white, inner sep=1.5pt]
\node (v6) at (0,0.6) {};
\node (v13) at (0,1) {};
\node (v8) at (-0.4,1.3) {};
\node (v3) at (0.4,1.3) {};
\node (v11) at (-0.9,0.8) {};
\node (v1) at (0.9,0.8) {};
\node (v2) at (-0.7,1.8) {};
\node (v12) at (0.7,1.8) {};
\path[edge] (v6) -- (v13) -- (v8) -- (v3) -- (v13); 
\path[edge] (v11) -- (v6) -- (v1);
\path[edge] (v2) -- (v8);
\path[edge] (v12) -- (v3); 
\node[fill=red!70!white] (v4) at (-0.7,-0.0) {};
\node[fill=red!70!white] (v10) at (0.7,-0.0) {};
\node (v9) at (-0.6,-0.8) {};
\node (v7) at (0.6,-0.8) {};
\node (v0) at (0,-0.5) {};
\node (v5) at (0,-1.1) {};
\path[edge] (v4) -- (v11) -- (v2) -- (v12) -- (v1) -- (v10); 
\path[edge] (v10) -- (v4); 
\path[edge] (v4) -- (v9) -- (v0) -- (v7) -- (v10);
\path[edge] (v9) -- (v5) -- (v7); 
\path[edge] (v0) -- (v5); 
\end{tikzpicture}}
\quad %
\subcaptionbox{\label{subfig:d}}
{\begin{tikzpicture}[scale=1.2]
\tikzstyle{edge}=[draw,thick]
\tikzstyle{every node}=[draw, circle, fill=blue!50!white, inner sep=1.5pt]
\node[fill=red!70!white] (v4) at (-1,-0.0) {};
\node[fill=red!70!white] (v10) at (1,-0.0) {};
\node[fill=red!70!white] (v8) at (-1,1) {};
\node[fill=red!70!white] (v11) at (1,1) {};
\node (v9) at (-0.6,-0.8) {};
\node (v7) at (0.6,-0.8) {};
\node (v0) at (0,-0.5) {};
\node (v5) at (0,-1.1) {};
\path[edge] (v9) -- (v5) -- (v7); 
\path[edge] (v0) -- (v5); 
\path[edge] (v4) -- (v9) -- (v0) -- (v7) -- (v10); 
\node (v3) at (-0.6,{0.8+1}) {};
\node (v6) at (0.6,{0.8+1}) {};
\node (v2) at (0,{0.5+1}) {};
\node (v1) at (0,{1.1+1}) {};
\path[edge] (v1) -- (v2); 
\path[edge] (v3) -- (v1) -- (v6) -- (v2) -- (v3); 
\node (v12) at (-0.3,0.8) {};
\node (v13) at (0.3,0.8) {};
\node (v14) at (-0.3,0.2) {};
\node (v15) at (0.3,0.2) {};
\path[edge] (v3) -- (v8) -- (v4); 
\path[edge] (v6) -- (v11) -- (v10); 
\path[edge] (v12) -- (v13) -- (v15) -- (v14) -- (v12); 
\path[edge] (v12) -- (v8);
\path[edge] (v14) -- (v4); 
\path[edge] (v13) -- (v11);
\path[edge] (v15) -- (v10); 
\end{tikzpicture}}
\quad %
\subcaptionbox{\label{subfig:e}}
{\begin{tikzpicture}[scale=1.2]
\tikzstyle{edge}=[draw,thick]
\tikzstyle{every node}=[draw, circle, fill=blue!50!white, inner sep=1.5pt]
\node[fill=red!70!white] (v1) at (-1,0.0) {};
\node[fill=red!70!white] (v2) at (1,0.0) {};
\node[fill=red!70!white] (v9) at (-1,-0.6) {};
\node[fill=red!70!white] (v10) at (1,-0.6) {};
\node (v4) at (-0.8,1.2) {};
\node (v3) at (0.8,1.2) {};
\node (v5) at (-0.4, 0.3) {};
\node (v6) at (0.4, 0.3) {};
\node (v8) at (-0.4,0.8) {};
\node (v7) at (0.4,0.8) {};
\node (v12) at (-0.8,-1.2-0.6) {};
\node (v11) at (0.8,-1.2-0.6) {};
\node (v14) at (-0.4, -0.3-0.6) {};
\node (v15) at (0.4, -0.3-0.6) {};
\node (v13) at (-0.4,-0.8-0.6) {};
\node (v16) at (0.4,-0.8-0.6) {};
\path[edge] (v1) -- (v9);
\path[edge] (v2) -- (v10);
\path[edge] (v1) -- (v4) -- (v3) -- (v2); 
\path[edge] (v9) -- (v12) -- (v11) -- (v10); 
\path[edge] (v5) -- (v6) -- (v7) -- (v8) -- (v5); 
\path[edge] (v13) -- (v14) -- (v15) -- (v16) -- (v13);
\path[edge] (v9) -- (v14);
\path[edge] (v10) -- (v15);
\path[edge] (v11) -- (v16);
\path[edge] (v12) -- (v13);
\path[edge] (v1) -- (v5);
\path[edge] (v4) -- (v8);
\path[edge] (v3) -- (v7);
\path[edge] (v2) -- (v6);
\end{tikzpicture}}
\subcaptionbox{\label{subfig:f}}
{\begin{tikzpicture}[scale=1.2]
\tikzstyle{edge}=[draw,thick]
\tikzstyle{every node}=[draw, circle, fill=blue!50!white, inner sep=1.5pt]
\node (v1) at (0,0) {};
\node (v2) at (30:0.4) {};
\node (v3) at (150:0.4) {};
\node (v4) at (-90:0.4) {};
\node (v5) at ($ (v4) + (-60:0.5) $) {};
\node (v6) at ($ (v4) + (-120:0.5) $) {};
\node (v7) at ($ (v5) + (-120:0.5) $) {};
\node[fill=red!70!white] (v8) at ($ (v7) + (-90:0.4) $) {};
\path[edge] (v5) -- (v6) -- (v4) -- (v5) -- (v7) -- (v6);
\node (v9) at ($ (v2) + (0:0.5) $) {};
\node (v10) at ($ (v2) + (60:0.5) $) {};
\node (v11) at ($ (v9) + (60:0.5) $) {};
\path[edge] (v9) -- (v10) -- (v2) -- (v9) -- (v11) -- (v10);
\node[fill=red!70!white] (v12) at ($ (v11) + (30:0.4) $) {};
\node (v13) at ($ (v3) + (120:0.5) $) {};
\node (v14) at ($ (v3) + (180:0.5) $) {};
\node (v15) at ($ (v14) + (120:0.5) $) {};
\node[fill=red!70!white] (v16) at ($ (v15) + (150:0.4) $) {};
\path[edge] (v13) -- (v14) -- (v3) -- (v13) -- (v15) -- (v14);
\path[edge] (v1) -- (v2);
\path[edge] (v1) -- (v3);
\path[edge] (v1) -- (v4);
\path[edge] (v7) -- (v8);
\path[edge] (v11) -- (v12);
\path[edge] (v15) -- (v16);
\path[edge] (v8) to[bend right=30] (v12);
\path[edge] (v12) to[bend right=30] (v16);
\path[edge] (v16) to[bend right=30] (v8);
\end{tikzpicture}}
\quad %
\subcaptionbox{\label{subfig:g}}
{\begin{tikzpicture}[scale=1.3]
\tikzstyle{edge}=[draw,thick]
\tikzstyle{every node}=[draw, circle, fill=blue!50!white, inner sep=1.5pt]
\node[fill=red!70!white] (v9) at (0,0) {};
\node (v11) at (0,-0.6) {};
\node (v13) at (0,-1.2) {};
\node[fill=red!70!white] (v15) at (0,-1.8) {};
\node[fill=red!70!white] (v10) at (2,0) {};
\node (v12) at (2,-0.6) {};
\node (v14) at (2,-1.2) {};
\node[fill=red!70!white] (v16) at (2,-1.8) {};
\node (v17) at (1,-0.6) {};
\node (v18) at (1,-1.2) {};
\path[edge] (v9) -- (v10) -- (v11) -- (v17) -- (v12) -- (v9);
\path[edge] (v13) -- (v18) -- (v14) -- (v16) -- (v15) -- (v13);
\path[edge] (v11) -- (v13);
\path[edge] (v17) -- (v18);
\path[edge] (v12) -- (v14);
\node (v1) at (-0.8,-0.4) {};
\node (v2) at (-0.8,-1.4) {};
\node (v3) at (-0.8-0.2,-0.9) {};
\node (v4) at (-0.8+0.2,-0.9) {};
\path[edge] (v3) -- (v4) -- (v2) -- (v3) -- (v1) -- (v4);
\node (v5) at (2+0.8,-0.4) {};
\node (v6) at (2+0.8,-1.4) {};
\node (v7) at (2+0.8-0.2,-0.9) {};
\node (v8) at (2+0.8+0.2,-0.9) {};
\path[edge] (v7) -- (v8) -- (v5) -- (v7) -- (v6) -- (v8);
\path[edge] (v1) -- (v9);
\path[edge] (v2) -- (v15);
\path[edge] (v5) -- (v10);
\path[edge] (v6) -- (v16);
\end{tikzpicture}}
\quad %
\subcaptionbox{\label{subfig:h}}
{\begin{tikzpicture}[scale=1.6]
\tikzstyle{edge}=[draw,thick]
\tikzstyle{every node}=[draw, circle, fill=blue!50!white, inner sep=1.5pt]
\node (v1) at (-90:0.5) {};
\node (v2) at (-90+72:0.5) {};
\node (v3) at (-90+72*2:0.5) {};
\node (v4) at (-90+72*3:0.5) {};
\node (v5) at (-90+72*4:0.5) {};
\path[edge] (v2) -- (v3) -- (v4) -- (v5);
\path[edge] (v1) -- (v3) -- (v5) -- (v2) -- (v4) -- (v1);
\node[fill=red!70!white] (v6) at (-0.8,-0.8) {};
\node[fill=red!70!white] (v7) at (0.8,-0.8) {};
\node (v10) at (0,-1.3) {};
\node (v11) at (0,-1.6) {};
\node (v12) at (0.4,-1.2) {};
\node (v13) at (0.5,-1.5) {};
\node (v8) at (-0.4,-1.2) {};
\node (v9) at (-0.5,-1.5) {};
\path[edge] (v6) -- (v5);
\path[edge] (v7) -- (v2);
\path[edge] (v6) -- (v1) -- (v7);
\path[edge] (v6) -- (v8) -- (v10) -- (v12) -- (v7);
\path[edge] (v6) -- (v9) -- (v11) -- (v13) -- (v7);
\path[edge] (v8) -- (v9);
\path[edge] (v12) -- (v13);
\path[edge] (v8) -- (v11) -- (v12);
\path[edge] (v9) -- (v10) -- (v13);
\end{tikzpicture}}
\caption{Examples of small regular graphs with two or more Šolt{\'e}s vertices: (a) to (c) are the three cubic graphs of order $14$ with two Šolt{\'e}s vertices; (d) and (e) are the two cubic graphs of order $16$ with four Šolt{\'e}s vertices;
(f) is the only cubic graph of order $16$ with three Šolt{\'e}s vertices; (g) is the only cubic graph of order $18$ with four Šolt{\'e}s  vertices; (h) is the only quartic graph of order $13$ with two Šolt{\'e}s vertices.
Šolt{\'e}s vertices are coloured red.}
\label{fig:small_examples}
\end{figure}
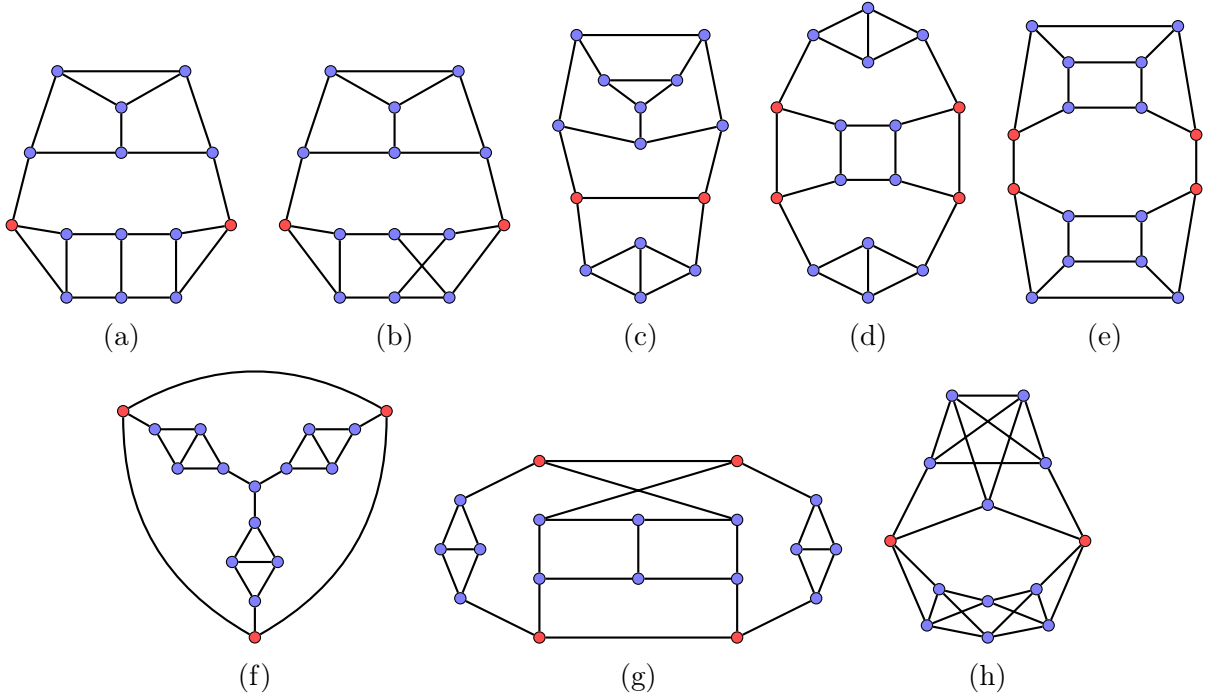

In the next two sections we construct an infinite family of cubic $2$-connected graphs with at least $2^r$, $r \geq 1$, Šolt{\'e}s vertices.
It recently came to our attention that Dobrynin independently found an infinite family of cubic graphs with four 
Šolt{\'e}s vertices~\cite{Dobrynin2023}. However, our method is more general.

\bigskip

\clearpage
\section{Cubic \texorpdfstring{$\boldsymbol{2}$}{2}-connected graphs with two Šolt{\'e}s vertices}

In the present section we prove the following result.

\begin{theorem}
\label{thm:2}
There exist infinitely many cubic $2$-connected graphs $G$ which contain at least two Šolt{\'e}s vertices.
\end{theorem}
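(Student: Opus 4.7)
The plan is to construct an explicit infinite family $\{G_n\}_{n \geq 1}$ of cubic, 2-connected graphs, each carrying two distinguished vertices $u_1, u_2$ that are Šolt\'es vertices. A natural starting point is one of the 14-vertex graphs already exhibited in Figure~\ref{fig:small_examples}(a)--(c): these are cubic, 2-connected, and their two Šolt\'es vertices are interchanged by a reflection automorphism. To obtain infinitely many examples I would insert, symmetrically and in a way that preserves the reflection, a chain of $n$ copies of a small cubic gadget (a prism rung, $K_4$ minus an edge, or a theta-like block) into a designated portion of the base graph that is fixed setwise by the reflection. Call the resulting graph $G_n$. Cubicity and 2-connectedness of $G_n$ are immediate from the local nature of the insertion, and the reflection $\sigma$ of the base graph extends to an automorphism of $G_n$ with $\sigma(u_1) = u_2$, so $u_1 \in S(G_n)$ will automatically force $u_2 \in S(G_n)$.

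The bulk of the argument is then to verify the single equality $\Delta(n) := W(G_n) - W(G_n - u_1) = 0$. Writing this as
\[
\Delta(n) \;=\; w_{G_n}(u_1) \;-\; \sum_{\{x,y\} \subseteq V(G_n)\setminus\{u_1\}}\bigl(d_{G_n - u_1}(x,y) - d_{G_n}(x,y)\bigr),
\]
the problem splits into evaluating the transmission $w_{G_n}(u_1)$ and the total "detour cost" incurred by pairs whose shortest path in $G_n$ passes through $u_1$. By the geometry of the inserted chain, both quantities are affine functions of $n$: each new gadget shifts all distances from $u_1$ to vertices on the far side of the chain by a fixed constant, and each new gadget lengthens the unique alternative route around $u_1$ by the same constant. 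The gadget parameters would be chosen (this is what fixes the construction) so that the two linear functions in $n$ coincide, giving $\Delta(n) \equiv 0$.

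The main obstacle is controlling the detour term honestly. Because $G_n$ is cubic, each neighbour of $u_1$ is reachable from any other vertex by two combinatorially distinct routes, and one must rule out that deleting $u_1$ causes a shortest path to take an unexpected long way around. I would handle this by choosing the base graph and the gadget so that for every $n$ large enough, all pairs $\{x,y\}$ whose shortest path used $u_1$ have an alternative shortest path that stays inside a uniformly bounded neighbourhood of $u_1$ independent of $n$. Once this local replacement is identified, only an $n$-independent boundary calculation remains; verifying the identity in that bounded region, together with the linear cancellation on the chain, finishes the proof and gives infinitely many cubic 2-connected graphs with at least two Šolt\'es vertices, establishing Theorem~\ref{thm:2}.
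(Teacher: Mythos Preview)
Your proposal has a genuine gap at its core. You assert that both $w_{G_n}(u_1)$ and the detour cost are \emph{affine} functions of $n$, but this is not substantiated and is in fact generically false. If you insert a chain of $n$ gadgets, each contributing $k$ new vertices, then the $i$-th gadget's vertices sit at distance roughly $c+i\delta$ from $u_1$ (for some constants $c,\delta>0$ determined by the geometry), so $w_{G_n}(u_1)$ picks up a term $\sum_{i=1}^{n} k(c+i\delta)$, which is quadratic in $n$, not affine. The sentence ``each new gadget shifts all distances from $u_1$ to vertices on the far side of the chain by a fixed constant'' accounts only for the old far-side vertices, not for the new gadget vertices themselves. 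Worse, your treatment of the detour term is internally inconsistent: first you say each gadget ``lengthens the unique alternative route around $u_1$ by the same constant'' (so the detour grows linearly in $n$), and later you say the alternative shortest paths ``stay inside a uniformly bounded neighbourhood of $u_1$ independent of~$n$'' (so the detour is bounded). These cannot both hold. Without an explicit construction and a computation showing that the two functions genuinely coincide, the argument does not go through; the phrase ``the gadget parameters would be chosen \dots\ so that the two linear functions in $n$ coincide'' is exactly the step that needs proof, and you have given no evidence that such a choice exists.

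The paper's proof takes a fundamentally different route that avoids this difficulty. It does \emph{not} look for a single family with $\Delta(n)\equiv 0$. Instead it builds, for each $t\ge 3$, a base graph $G_t$ (a ring of $2t$ diamonds with a small attachment) in which the imbalance $f(t)=W(G_t-u_1)-W(G_t)$ is a cubic polynomial in~$t$ tending to $+\infty$. Because $u_1$ lies at maximal distance from the two pendant vertices $v_1,v_2$, any tree attached at $v_1,v_2$ only subtracts from $f(t)$ the sum $D$ of distances from $u_1$ to the new vertices, and $D$ can be tuned to hit any integer in an interval $[D_m,D^m]$ that eventually contains $f(t)$. The remaining work (Lemmas on the sequences $(\ell_1,\dots,\ell_d)$, the red and blue edges) is devoted to realising the required layer profile by a bona fide cubic $2$-connected extension. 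Thus the paper produces one graph per value of~$t$ by an exact one-shot balancing, rather than hoping for a parametric identity in~$n$; this is precisely the idea your sketch is missing.
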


We prove Theorem~{\ref{thm:2}} by a sequence of lemmas.
We start by giving several definitions.
First, we define a graph $G_t$ on $8t+8$ vertices, where $t\ge 1$.
Take $2t$ copies of the diamond graph (i.e.\ $K_4 - e$) and connect their degree-$2$ vertices, so that a ring of $2t$ copies of $K_4 - e$ is formed.
Add a disjoint $4$-cycle to that graph.
Then subdivide one of the edges that connects two consecutive diamonds by two vertices, denote them by $z_1$ and $z_2$, and connect $z_1$ and $z_2$ with two opposite vertices of the $4$-cycle.
Add a leaf to each remaning degree-$2$ vertex of the $4$-cycle.
Denote the resulting graph by $G_t$.
For an illustration, see $G_3$ in Figure~\ref{fig:gtgraph}.
Note that $G_t$ has exactly two leaves, denote them by $v_1$ and $v_2$, and all the remaining vertices have degree~$3$.
Denote by $u_1$ and $u_2$ the two vertices at the longest distance from $v_1$.
This distance is $d_G(v_1, u_1) = d_G(v_1, u_2) = 3t+3$ and also $d_G(v_2, u_1) = d_G(v_2, u_2) = 3t+3$.
Observe that $G_t$ has an automorphism (a symmetry) fixing both $v_1$ and $v_2$, while interchanging $u_1$ with~$u_2$.

\begin{figure}[!htb]
\centering
\begin{tikzpicture}[scale=1.2]
\tikzstyle{edge}=[draw,thick]
\tikzstyle{every node}=[draw, circle, fill=blue!50!white, inner sep=1.5pt]
\foreach \i in {1,...,6} { 
	\pgfmathsetmacro{\kot}{360 * \i / 7}
	\coordinate (c\i) at (\kot:1.5);
	\node (a\i_1) at ($ (c\i) + (\kot:0.25) $) {};
	\node (a\i_2) at ($ (c\i) + (\kot:-0.25) $) {};
	\node (a\i_3) at ($ (c\i) + ({\kot + 90}:0.4) $) {};
	\node (a\i_4) at ($ (c\i) + ({\kot + 90}:-0.4) $) {};
	\path[edge] (a\i_1) -- (a\i_2) -- (a\i_3) -- (a\i_1) -- (a\i_4) -- (a\i_2);
}
\node[draw=none,fill=none] at (-1.8, 0.3) {$u_1$};
\node[draw=none,fill=none] at (-1.8, -0.3) {$u_2$};
\node[draw=none,fill=none] at (1.65, 0.55) {$z_1$};
\node[draw=none,fill=none] at (1.65, -0.6) {$z_2$};
\path[edge] (a1_3) -- (a2_4);
\path[edge] (a2_3) -- (a3_4);
\path[edge] (a3_3) -- (a4_4);
\path[edge] (a4_3) -- (a5_4);
\path[edge] (a5_3) -- (a6_4);
\node (b1) at ($ (0:1.5) + (90:0.35) $) {};
\node (b2) at ($ (0:1.5) + (90:-0.35) $) {};
\path[edge] (a1_4) -- (b1) -- (b2) -- (a6_3);
\node (c1) at ($ (b1) + (0.5, 0) $) {};
\node (c1a) at ($ (c1) + (0.5, 0) $) {};
\node[label=0:$v_1$]  (c1b) at ($ (c1a) + (0.5, 0) $) {};
\node (c2) at ($ (b2) + (0.5, 0) $) {};
\node (c2a) at ($ (c2) + (0.5, 0) $) {};
\node[label=0:$v_2$] (c2b) at ($ (c2a) + (0.5, 0) $) {};
\path[edge] (b1) -- (c1) -- (c1a) -- (c1b);
\path[edge] (b2) -- (c2) -- (c2a) -- (c2b);
\path[edge] (c1) -- (c2a);
\path[edge] (c2) -- (c1a);
\end{tikzpicture}
\caption{The graph $G_3$.}
\label{fig:gtgraph}
\end{figure}
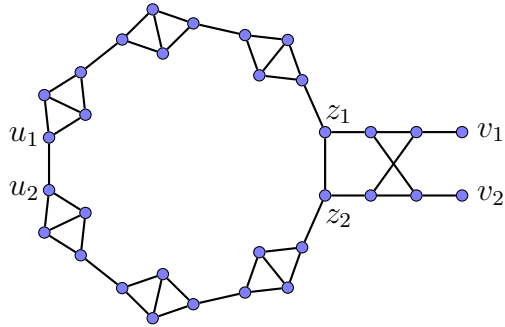

Now, we determine $f(t)=W(G_t-u_1)-W(G_t)$.
We compute $f(t)$ by summing the contributions of all vertices (first the contribution of quadruples of vertices of all copies of $K_4-e$, and then the contribution of the vertices which are not in any copy of $K_4-e$).
As the calculation is long and tedious, we present just the result
\begin{equation}
f(t)=16 t^3 - 8 t^2 - 26 t -14,
\end{equation}
which was checked by a computer.
Actually, the exact value of $f(t)$ is not important here.
The crucial property is that for $t$ big enough, $f(t)$ is positive.
In fact, $\lim_{t\to\infty}f(t)=\infty$; see also Section~\ref{sec:general},
where a lower bound for $f(t)$ is given.

To motivate the above definition, we briefly describe the main idea of the proof.
We attach trees $T_1$ and $T_2$ to vertices $v_1$ and $v_2$ of $G_t$, and then we add edges to them so that the resulting graph 
$H$ will be cubic and $2$-connected.
See Figure~\ref{fig:hgraph} for an example.
This will be done in three phases.
\begin{enumerate}[label=P\arabic*)]
\item
In the first phase, we will construct trees $T_1$ and $T_2$ to guarantee that vertices $u_1$ and $u_2$ are indeed Šolt{\'e}s vertices.
The vertices of $T_1$ and $T_2$ can be partitioned into several layers based on their distance to $v_1$ and $v_2$, respectively.
The resulting graph will be denoted by $Q$.
\item
In the second phase, we will add the `red' edges, whose endpoints are in two consecutive layers, to graph $Q$.
The resulting graph will be denoted by $R$.
The purpose of the second phase is to make sure that the sum of free valencies is even within each layer, making the next phase possible.
Note that although the final graph $H$ is cubic, the intermediate graphs, $Q$ and $R$, are subcubic. The \emph{free valency} of a vertex $v$ in a subcubic 
graph $G$ is $3 - \deg_G(v)$.
\item
In the last phase, we will add blue edges to $R$ in order to to obtain cycles and paths, so that the resulting graph $H$ will be cubic and $2$-connected.
The endpoints of `blue' edges will reside in the same layer of the forest $T_1 \cup T_2$. Adding `red' and `blue' edges has no influence on Šolt{\'e}sness of $u_1$ and $u_2$.
\end{enumerate}

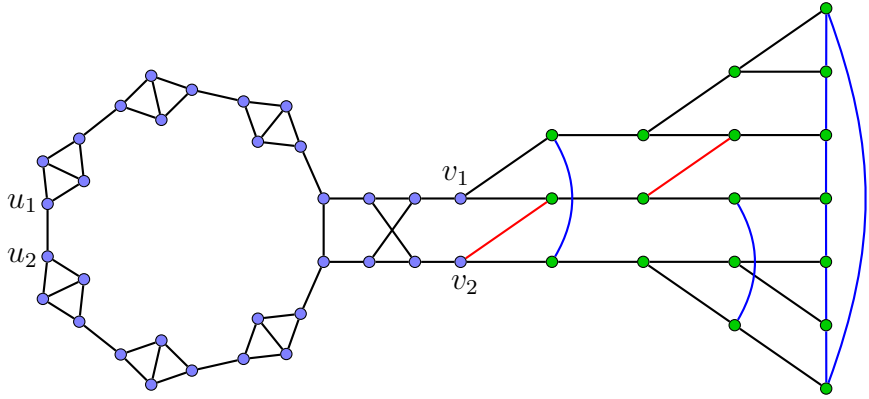
\begin{figure}[!htb]
\centering
\begin{tikzpicture}[scale=1.2]
\tikzstyle{edge}=[draw,thick]
\tikzstyle{treeVtx}=[fill=green!80!black]
\tikzstyle{every node}=[draw, circle, fill=blue!50!white, inner sep=1.5pt]
\foreach \i in {1,...,6} { 
	\pgfmathsetmacro{\kot}{360 * \i / 7}
	\coordinate (c\i) at (\kot:1.5);
	\node (a\i_1) at ($ (c\i) + (\kot:0.25) $) {};
	\node (a\i_2) at ($ (c\i) + (\kot:-0.25) $) {};
	\node (a\i_3) at ($ (c\i) + ({\kot + 90}:0.4) $) {};
	\node (a\i_4) at ($ (c\i) + ({\kot + 90}:-0.4) $) {};
	\path[edge] (a\i_1) -- (a\i_2) -- (a\i_3) -- (a\i_1) -- (a\i_4) -- (a\i_2);
}
\node[draw=none,fill=none] at (-1.8, 0.3) {$u_1$};
\node[draw=none,fill=none] at (-1.8, -0.3) {$u_2$};
\node[draw=none,fill=none] at (2.95, 0.6) {$v_1$};
\node[draw=none,fill=none] at (3.05, -0.6) {$v_2$};
\path[edge] (a1_3) -- (a2_4);
\path[edge] (a2_3) -- (a3_4);
\path[edge] (a3_3) -- (a4_4);
\path[edge] (a4_3) -- (a5_4);
\path[edge] (a5_3) -- (a6_4);
\node (b1) at ($ (0:1.5) + (90:0.35) $) {};
\node (b2) at ($ (0:1.5) + (90:-0.35) $) {};
\path[edge] (a1_4) -- (b1) -- (b2) -- (a6_3);
\node (c1) at ($ (b1) + (0.5, 0) $) {};
\node (c1a) at ($ (c1) + (0.5, 0) $) {};
\node  (c1b) at ($ (c1a) + (0.5, 0) $) {};
\node (c2) at ($ (b2) + (0.5, 0) $) {};
\node (c2a) at ($ (c2) + (0.5, 0) $) {};
\node (c2b) at ($ (c2a) + (0.5, 0) $) {};
\path[edge] (b1) -- (c1) -- (c1a) -- (c1b);
\path[edge] (b2) -- (c2) -- (c2a) -- (c2b);
\path[edge] (c1) -- (c2a);
\path[edge] (c2) -- (c1a);
\node[treeVtx] (l1x2) at ($ (c1b) + (1, 0) $) {};
\node[treeVtx] (l1x1) at ($ (l1x2) + (0, 0.7) $) {};
\node[treeVtx] (l1x3) at ($ (l1x2) + (0, -0.7) $) {};
\node[treeVtx] (l2x1) at ($ (l1x1) + (1, 0) $) {};
\node[treeVtx] (l2x2) at ($ (l1x2) + (1, 0) $) {};
\node[treeVtx] (l2x3) at ($ (l1x3) + (1, 0) $) {};
\node[treeVtx] (l3x2) at ($ (l2x1) + (1, 0) $) {};
\node[treeVtx] (l3x3) at ($ (l2x2) + (1, 0) $) {};
\node[treeVtx] (l3x4) at ($ (l2x3) + (1, 0) $) {};
\node[treeVtx] (l3x1) at ($ (l3x2) + (0, 0.7) $) {};
\node[treeVtx] (l3x5) at ($ (l3x4) + (0, -0.7) $) {};
\node[treeVtx] (l4x2) at ($ (l3x1) + (1, 0) $) {};
\node[treeVtx] (l4x3) at ($ (l3x2) + (1, 0) $) {};
\node[treeVtx] (l4x4) at ($ (l3x3) + (1, 0) $) {};
\node[treeVtx] (l4x5) at ($ (l3x4) + (1, 0) $) {};
\node[treeVtx] (l4x6) at ($ (l3x5) + (1, 0) $) {};
\node[treeVtx] (l4x7) at ($ (l4x6) + (0, -0.7) $) {};
\node[treeVtx] (l4x1) at ($ (l4x2) + (0, 0.7) $) {};
\path[edge] (c1b) -- (l1x1);
\path[edge] (c1b) -- (l1x2);
\path[edge] (c2b) -- (l1x3);
\path[edge] (l1x1) -- (l2x1);
\path[edge] (l1x2) -- (l2x2);
\path[edge] (l1x3) -- (l2x3);
\path[edge] (l2x1) -- (l3x1);
\path[edge] (l2x1) -- (l3x2);
\path[edge] (l2x2) -- (l3x3);
\path[edge] (l2x3) -- (l3x4);
\path[edge] (l2x3) -- (l3x5);
\path[edge] (l3x1) -- (l4x1);
\path[edge] (l3x1) -- (l4x2);
\path[edge] (l3x2) -- (l4x3);
\path[edge] (l3x3) -- (l4x4);
\path[edge] (l3x4) -- (l4x5);
\path[edge] (l3x4) -- (l4x6);
\path[edge] (l3x5) -- (l4x7);
\path[edge,color=red] (c2b) -- (l1x2);
\path[edge,color=red] (l2x2) -- (l3x2);
\path[edge,color=blue] (l1x1) edge[bend left=30] (l1x3);
\path[edge,color=blue] (l3x3) edge[bend left=30] (l3x5);
\path[edge,color=blue] (l4x1) -- (l4x2) -- (l4x3) -- (l4x4) -- (l4x5) -- (l4x6) -- (l4x7);
\path[edge,color=blue] (l4x1) edge[bend left=20] (l4x7);
\end{tikzpicture}
\caption{A graph $H$ with two Šolt{\'e}s vertices, namely $u_1$ and $u_2$, that contains $G_3$.}
\label{fig:hgraph}
\end{figure}

Let us consider the resulting graph $H$.
Observe that if $x \in V(H) \setminus V(G_t)$, then we have $w_{H}(x)-w_{H-u_1}(x)=d_{H}(u_1,x)$. Clearly, every $(u_1,x)$-path contains
one of the vertices from $\{v_1, v_2\}$.
Hence, for calculating $W(H)-W(H-u_1)$, only the distance of the new vertices $x$ to $\{v_1,v_2\}$ matters.

We need to find suitable trees $T_1$ and $T_2$ rooted at $v_1$ and $v_2$, respectively.
Each of these trees will have $q$ vertices and their depth, say $d$, will be determined later.
What next properties do $T_1$ and $T_2$ need to have?
Let $\ell_i$ be the number of vertices of the forest $T_1 \cup T_2$ at distance $i$, $1\le i\le d$, from $\{v_1,v_2\}$.
Since the resulting graph will be cubic and $2$-connected, we have $2\le \ell_1\le 4, 2\le \ell_2\le 8, 2\le \ell_3\le 2^4,\dots$ 
and for the last value $\ell_d$ we have $1\le\ell_d\le 2^{d+1}$.
The trees attached to $v_1$ and $v_2$ may be paths in which case we get $\ell_1 = \ell_2 = \cdots = \ell_q = 2$ (since each of $T_1$ and $T_2$ have exactly $q$ vertices).
In this case the transmission of $v_j$ in $T_j$ is biggest possible, $1\le j\le 2$.
In the other extremal situation the transmission of $v_j$ in $T_j$ is smallest possible, 
$1\le j\le 2$, which results in $\ell_i = 2^{i+1}$, $1\le i<d$.
If $x\in V(H)\setminus V(G_t)$ is at distance $i$ from $\{v_1,v_2\}$, then $d_{H}(u_1,x)=3t+3+i$.
Denote by $D$ the sum of distances from all vertices of $V(H)\setminus V(G_t)$ to $u_1$.
Then
\begin{equation}
D=\sum_{i=1}^d(3t+3+i)\ell_i.
\end{equation}
Observe that we need to find a finite sequence $(\ell_1,\ell_2, \ldots,\ell_d)$ so that $f(t)=D$.
As the resulting graph $H$ has to be cubic, we need to add an even number of vertices, $2q$, to the graph $G_t$.
What are the bounds for $D$?

First, we determine the lower bound; let us denote it by $D_m$.
This bound will be obtained when $\ell_i$ attains the maximum possible value of $2^{i + 1}$ for every $1 \leq i \leq d - 1$.
In other words, we are attaching complete binary trees to $v_1$ and $v_2$.
Let $a=\lfloor\log_2(2q+3)\rfloor$. Recall that the depth of a complete binary tree with $n$ vertices is $\lfloor \log_2(n) \rfloor$ and note that in our case $a - 1= d$. Then
\begin{align*}
\ell_1&=4,\\
\ell_2&=8,\\
& \;\; \vdots\\
\ell_{a-2}&=2^{a-1},\\
\ell_{a-1}&=2q-\sum_{i=1}^{a-2} \ell_i=2q-2^a+4.
\end{align*}
The above sequence will be called the \emph{short sequence} and denoted by $L_m$. 
Using the formula $\sum_{i=1}^aix^{i-1}=(ax^{a+1}-(a+1)x^a+1)/(x-1)^2$, we get
\begin{equation}
\begin{aligned}
D_m &=\sum_{i=1}^{a-2}(3t+3+i)2^{i+1}+(3t+a+2)(2q-2^a+4)\\
&=(3t+1)2q+\sum_{i=1}^{a-2} (i+2)2^{i+1}+(a+1)(2q-2^a+4)+2\cdot 2^1+1\cdot 2^0-5\\
&=(3t+1)2q+\sum_{i=1}^a i 2^{i-1}+(a+1)(2q-2^a+4)-5\\
&=(3t+1)2q+a2^{a+1}-(a+1)2^a+1+(a+1)2q-(a+1)2^a+4(a+1)-5\\
&=(3t+1)2q+a2^{a+1}-(a+1)2^{a+1}+(a+1)2q+4a\\
&=(3t+1)2q-2^{a+1}+(a+1)2q+4a.
\end{aligned}
\end{equation}

Now we find the upper bound $D^m$ for $D$.
In this case $\ell_1=\ell_2=\dots=\ell_q=2$; this sequence will be called the \emph{long sequence} and denoted by $L^m$. Therefore,
\begin{equation}
\begin{aligned}
D^m&=2(3t+4)+2(3t+5)+\dots+2(3t+3+q)\\
&=(3t+3)2q+2\sum_{i=1}^qi\\
&=(3t+1)2q+q^2+5q.
\end{aligned}
\end{equation}
Note that $D_m$ and $D^m$ are functions of $q$ and $t$.

\begin{lemma}
\label{lem:qranges}
Let $t \geq 3$. Then there exists $q$, such that $D_m \leq f(t) \leq D^m$.
\end{lemma}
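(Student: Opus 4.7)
The plan is a greedy choice: let $q^* = q^*(t)$ be the smallest positive integer with $D^m(q^*, t) \geq f(t)$, and show that then $D_m(q^*, t) \leq f(t)$ as well. Existence and minimality of $q^*$ are clear from the facts that $D^m(q, t) = q^2 + (6t+7) q$ is strictly increasing and unbounded in $q$, and that $D^m(1, t) = 6t+8 < 16t^3 - 8t^2 - 26t - 14 = f(t)$ for every $t \geq 3$. Note that $D_m(q, t)$ is also strictly increasing in $q$, a fact we can verify by a short calculation that distinguishes whether $a = \lfloor \log_2(2q + 3) \rfloor$ jumps or stays constant as $q$ increments.

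It suffices to prove the (slightly stronger) inequality $D_m(q^*, t) \leq D^m(q^* - 1, t)$, for then the chain
\[
D_m(q^*, t) \;\leq\; D^m(q^* - 1, t) \;<\; f(t) \;\leq\; D^m(q^*, t)
\]
establishes the lemma. Substituting the closed forms for $D^m$ and $D_m$ and writing $a = \lfloor \log_2(2q^* + 3) \rfloor$, this simplifies to
\[
(q^*)^2 + (1 - 2a)\, q^* + 2^{a+1} - 4a - 6t - 6 \;\geq\; 0.
\]
The dominant contribution is the quadratic $(q^*)^2$: from $D^m(q^*, t) \geq f(t)$ one obtains a lower bound of the form $q^* \geq c\, t^{3/2}$ for an explicit constant $c$, while $a = O(\log t)$ grows only logarithmically and $2^{a+1} > 2q^* + 3$ contributes positively.

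The main obstacle will be to make this estimate uniform in $t \geq 3$, since the asymptotic separation between the quadratic $(q^*)^2 \sim 16 t^3$ and the linear and logarithmic terms is very comfortable for large $t$ but becomes tightest at the lower end. The margin is already healthy in the small regime (for $t = 3$ one checks $q^* = 9$, $a = 4$, and the displayed expression equals $10$; for $t = 4$ it equals $150$; it grows rapidly thereafter), so the cleanest plan is to combine a short explicit verification for a handful of small $t$ with a clean asymptotic estimate based on $q^* \geq c\, t^{3/2}$ for the tail. No delicate cancellation is required; the inequality has slack of order $t^3$ once $t$ is even moderately large.
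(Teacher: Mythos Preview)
Your proposal is correct and, at its core, follows the same two-step logic as the paper: an asymptotic argument for large $t$ combined with a finite check for small $t$. The paper's own proof is extremely terse---it simply asserts that $q \sim 4t\sqrt{t}$ works for large $t$ and defers small $t$ to a computer-generated table (Table~\ref{tbl:qranges}).

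Your approach is somewhat more systematic: rather than guessing the right order of magnitude for $q$, you take the greedy choice $q^* = \min\{q : D^m(q,t) \geq f(t)\}$ and reduce the remaining inequality $D_m(q^*,t) \leq f(t)$ to the single explicit condition
\[
(q^*)^2 + (1-2a)\,q^* + 2^{a+1} - 4a - 6t - 6 \;\geq\; 0.
\]
This buys you a cleaner separation of what must be verified, and the observation that $2^{a+1} > 2q^*+3$ contributes positively is a nice touch that the paper does not isolate. That said, both arguments ultimately bottom out in the same place: a routine asymptotic estimate (your $(q^*)^2 \sim 16t^3$ dominating the $O(t^{3/2}\log t)$ and $O(t)$ terms) plus a short direct check for the first few values of $t$. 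Your remark that $D_m$ is increasing in $q$ is true but not actually used in the argument as written.
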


\begin{proof}
Observe that if $q\sim 4t\sqrt t$ then  we have $D_m \leq f(t) \leq D^m$ for large enough $t$.
For small values of $t$ we computed the minimum and maximum value of $q$ that satisfies the condition
of the lemma; see Table~\ref{tbl:qranges}.
\end{proof}

Note that the value $q$ in Lemma~\ref{lem:qranges} is uniquely determined only for $t = 3$.
For larger values of $t$ we get a range of options. Any $q$ between $q_\text{min}$ and $q_\text{max}$ can be
used. Moreover, different values of $q$ lead to non-isomorphic graphs $H$.

\begin{table}
$
\begin{array}{r||rrrrrrrrrrrrrrrr}
t & 3 & 4 & 5 & 6 & 7 & 8 & 9 & 10 & 11 & 12 & 13 & 14 & 15 & 16 & 17 & 18  \\ 
\hline
q_\text{min} & 9 & 17 & 27 & 38 & 50 & 64 & 78 & 94 & 110 & 128 & 146 & 165 & 185 & 205 & 227 & 249 \\ 
\hline
q_\text{max} & 9 & 21 & 38 & 59 & 85 & 116 & 152 & 192 & 238 & 288 & 344 & 405 & 471 & 542 & 617 & 698 
\end{array}
$

\vspace{\baselineskip}
$
\begin{array}{r||rrrrrrrrrrrrr}
t & 19 & 20 & 21 & 22 & 23 & 24 & 25 & 26 & 27 & 28 & 29 & 30 & 31 \\
\hline
q_\text{min} &  272 & 295 & 319 & 344 & 370 & 396 & 422 & 450 & 478 & 506 & 535 & 565 & 595  \\ 
\hline
q_\text{max} & 785 & 876 & 973 & 1075 & 1182 & 1294 & 1411 & 1533 & 1661 & 1795 & 1933 & 2077 & 2225
\end{array}
$
\caption{The minimum and maximum value of $q$ which satisfy the condition of Lemma~\ref{lem:qranges}.}
\label{tbl:qranges}
\end{table}

Now, we show that for every integer $D$, $D_m\le D\le D^m$, there exists a finite sequence $(\ell_1,\ell_2, \ldots, \ell_d)$ which realises $D$.
Moreover, the graph $G_t$ can be extended to $H$ by attaching trees $T_1$ and $T_2$ that have 
$\ell_i$ vertices at distance $i$ from $\{v_1,v_2\}$, such that $H$ is a $2$-connected cubic graph.

Let us define an operation $\mathcal{M}$ that modifies one such sequence $L = (\ell_1,\ell_2, \ldots, \ell_d)$.

\begin{definition}
\label{def:operation}
Let $L = (\ell_1,\ell_2, \ldots, \ell_d)$.
Set $\ell_{d+1}=0$.
Let $i$ be the smallest value such that $\ell_i\ge 3$ and either 
\begin{enumerate}[label=(\roman*)]
\item
$2(\ell_i-1)>\ell_{i+1}+1$ or 
\item
$\ell_i=\ell_{i+1}=3$.
\end{enumerate}
We say that $\mathcal{M}(L) = (\ell'_1,\ell'_2, \ldots, \ell'_d)$ is a {\em modification} of sequence $L$ if $\ell_j'=\ell_j$ for all $j$, $1\le j\le d+1$, except for $j\in\{i,i+1\}$, for which $\ell_i'=\ell_i-1$ and $\ell_{i+1}'=\ell_{i+1}+1$.
\end{definition}

\begin{lemma}
\label{lem:sequenceGiver}
Let $t \geq 1$ and $q \geq 0$. For every $D$, $D_m\le D\le D^m$, there exists 
a finite sequence $L = (\ell_1, \ell_2, \ldots, \ell_d)$, such that
\begin{enumerate}[label=(\roman*)]
\item $\sum_{i=1}^{d} \ell_i = 2q$; 
\item $2 \leq \ell_i \leq 2^{i+1}$ for $i < d$ and $1 \leq \ell_d \leq 2^{d+1}$; 
\item $\ell_{i+1} \leq 2\ell_i$.
\end{enumerate}
Namely, $L = \mathcal{M}^{D - D_m}(L_m)$.
\end{lemma}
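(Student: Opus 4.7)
The plan is to induct on $D - D_m$. The base case $D = D_m$ is realised by $L_m$ itself, whose weighted sum was already computed above. For the inductive step, one shows that $\mathcal{M}$ preserves admissibility (conditions (i)--(iii) of the lemma) and raises $D$ by exactly one, so that $\mathcal{M}^{D-D_m}(L_m)$ realises $D$ whenever the iteration never gets stuck. Two verifications are needed: (a) admissibility is preserved whenever $\mathcal{M}$ is applicable, and (b) $\mathcal{M}$ is applicable to every admissible sequence other than $L^m$.

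For (a), $\mathcal{M}$ moves one unit from position $i$ to position $i+1$, so $\sum_j \ell_j = 2q$ is preserved (condition (i)) and $D$ rises by exactly $1$. The delicate point is condition (iii) at the modified pair, where both clauses of Definition~\ref{def:operation} are precisely calibrated: if $2(\ell_i - 1) > \ell_{i+1} + 1$ then $\ell_{i+1}' = \ell_{i+1} + 1 \leq 2\ell_i - 3 = 2\ell_i' - 1$, while if $\ell_i = \ell_{i+1} = 3$ then $\ell_{i+1}' = 4 = 2\ell_i'$. All other consecutive pairs are either unchanged or trivially preserve the inequality. Condition (ii) then reduces to a routine check, using $\ell_i \geq 3$ for the lower bound $\ell_i' \geq 2$ and the newly established condition (iii) for the upper bound $\ell_{i+1}' \leq 2^{i+2}$; the case $i = d$, where a new last entry $\ell_{d+1}' = 1$ is created, is handled similarly.

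The main obstacle is (b). The argument splits on whether $\ell_d \geq 3$. If so, then since $\ell_{d+1} = 0$ by convention, clause (i) of Definition~\ref{def:operation} applied at $i = d$ reads $2(\ell_d - 1) > 1$, which holds; hence some valid index exists. Otherwise $\ell_d \leq 2$, and I assume for contradiction that no valid $i$ exists. If every $\ell_j$ equals $2$, then the sum constraint forces $d = q$ and thus $L = L^m$, a contradiction. So let $i^*$ be the smallest index with $\ell_{i^*} \geq 3$; then $i^* < d$. Failure of both clauses at $i^*$ yields $\ell_{i^*+1} \geq 2\ell_{i^*} - 3 \geq 3$, strengthened to $\ell_{i^*+1} \geq 4$ when $\ell_{i^*} = 3$. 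Iterating the same argument along $j = i^*+1, i^*+2, \ldots$ forces $\ell_j \geq 3$ for all $j \geq i^*$, so in particular $\ell_d \geq 3$, contradicting $\ell_d \leq 2$.

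Starting from $L_m$ and iterating $\mathcal{M}$ therefore produces an orbit in which $D$ strictly increases by one per step and which can terminate only at $L^m$; since $L^m$ realises $D^m$, this orbit has length exactly $D^m - D_m$ and passes through admissible sequences realising every integer $D$ in $[D_m, D^m]$. The desired sequence is then $L = \mathcal{M}^{D - D_m}(L_m)$.
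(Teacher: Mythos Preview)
Your proof is correct and follows the same inductive strategy as the paper: start at $L_m$ and iterate $\mathcal{M}$, checking that admissibility is preserved and $D$ rises by one. In fact you supply considerably more than the paper does, which simply asserts ``It is easy to check that conditions (i) to (iii) are satisfied for $\mathcal{M}(L)$'' without verifying either (a) the preservation of (i)--(iii) or (b) that $\mathcal{M}$ is always applicable before reaching $L^m$; your argument for (b), propagating $\ell_j \ge 3$ forward from $i^*$ to force $\ell_d \ge 3$, is exactly what is needed and is absent from the paper. One tiny remark: in the case $\ell_d \le 2$ with no index $\ge 3$, you should note that $\ell_d = 1$ is excluded by parity of $\sum_j \ell_j = 2q$, so all $\ell_j = 2$ really does force $L = L^m$.
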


\begin{proof}
We start with the short sequence $L_m=(4,8,16,\dots)$. It clearly satisfies conditions (i) to (iii). We have already
seen that $L_m$ realises $D_m$. This established the base of induction.

Every other $D$ can be realised by a sequence that is obtained from $L_m$ by iteratively applying operation $\mathcal{M}$.
Assume that after $(D-1)-D_m$ steps we obtained the sequence $L=(\ell_1,\ell_2,\dots,\ell_d)$ which realises $D-1$.
Obviously, $\mathcal{M}(L)$ realises $D$. It is easy to check that conditions (i) to (iii) are satisfied for $\mathcal{M}(L)$.
\end{proof}

Next, we prove two additional properties that hold when operation $\mathcal{M}$ is iteratively applied on $L_m$.

\begin{lemma}
\label{lem:technicalLemma}
Let $t \geq 1$ and $q \geq 0$. Let $L = (\ell_1,\ell_2, \ldots, \ell_d)$ be obtained from $L_m$ by iteratively applying operation $\mathcal{M}$.
Then the following holds:
\begin{enumerate}[label=(\roman*)]
\item If (ii) of Definition~\ref{def:operation} applies, then $\ell_j = 2$ for all $j < i$.
\item If index $i$ in Definition~\ref{def:operation} is such that $\ell_{i+1} = 0$, then $\ell_{i} \geq 4$.
\end{enumerate}
\end{lemma}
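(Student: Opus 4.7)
My approach is to prove both parts via a single downward induction on the index $j$, using two facts from Lemma~\ref{lem:sequenceGiver}: the interior lower bound $\ell_j \geq 2$ for $j < d$, and the identity $\sum_{j=1}^d \ell_j = 2q$, which in particular is even. The induction shows that any $j < i$ with $\ell_j \geq 3$ would itself trigger $\mathcal{M}$, contradicting the minimality of the trigger index $i$.

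\textbf{Part (i).} Assume the trigger index satisfies $\ell_i = \ell_{i+1} = 3$. I establish $\ell_j \leq 2$ for every $1 \leq j < i$ by downward induction on $j$; combined with $\ell_j \geq 2$ this yields $\ell_j = 2$. For the base $j = i-1$: if $\ell_{i-1} = 3$, then condition~(ii) of Definition~\ref{def:operation} would fire at $i-1$ (since $\ell_i = 3$), contradicting the minimality of $i$; if $\ell_{i-1} \geq 4$, then $2(\ell_{i-1} - 1) \geq 6 > 4 = \ell_i + 1$, so condition~(i) fires at $i-1$, again a contradiction. For the inductive step, assuming $\ell_{j+1} \leq 2$, any $\ell_j \geq 3$ gives $2(\ell_j - 1) \geq 4 > 3 \geq \ell_{j+1} + 1$, so condition~(i) fires at $j$ — contradicting minimality.

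\textbf{Part (ii).} If $\ell_{i+1} = 0$, then $i = d$ and condition~(i) of Definition~\ref{def:operation} must be the one that fires (since (ii) would require $\ell_{i+1} = 3$). Assume for contradiction that $\ell_d = 3$. The downward induction from Part~(i) carries over verbatim: the base case argument at $j = d-1$ uses only $\ell_d = 3$, and the inductive step is identical. Hence $\ell_j = 2$ for every $1 \leq j < d$, so
\[
\sum_{j=1}^{d} \ell_j \;=\; 2(d-1) + 3 \;=\; 2d + 1
\]
is odd, contradicting the fact that $\sum_j \ell_j = 2q$ is even. Therefore $\ell_d \geq 4$.

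The only delicate point I anticipate is justifying that the inductive step really propagates all the way down to $j = 1$: this relies on the observation that below the minimal trigger index the values $\ell_{j+1}$ are uniformly at most $2$, which makes the trigger inequality $2(\ell_j - 1) > \ell_{j+1} + 1$ trivial the moment $\ell_j \geq 3$. Once this skeleton is in place, the parity punchline for~(ii) drops out almost for free, and the interior lower bound from Lemma~\ref{lem:sequenceGiver} is what converts $\ell_j \leq 2$ into the desired equality $\ell_j = 2$.
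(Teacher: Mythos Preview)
Your proof is correct and follows essentially the same approach as the paper: both argue that any index $j<i$ with $\ell_j\ge 3$ would itself trigger $\mathcal{M}$ (contradicting minimality of $i$), and both finish part~(ii) via the parity of $\sum_j \ell_j = 2q$. The only cosmetic difference is that you phrase the argument as an explicit downward induction, whereas the paper picks the largest $k<i$ with $\ell_k\ge 3$ and derives the contradiction directly.
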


\begin{proof}
(i): Observe that if part (ii) of Definition~\ref{def:operation} applies and $i\ge 2$, then $\ell_{i-1}=2$, since otherwise $i-1$ satisfies 
the assumption of the definition, thus $i$ is not the smallest such value.
Moreover, if there is $k$, $k<i$, with $\ell_k\ge 3$, then choose the largest possible $k$ with this property.
Then $\ell_{k+1}=\ell_{k+2}=\dots=\ell_{i-1}=2$.
Hence, $k$ satisfies the assumption of the definition, a contradiction.
It means that if part (ii) of the definition applies, then $\ell_{i-1}=\ell_{i-2}=\dots=\ell_1=2$.

(ii): If $\ell_{i+1}=0$ then clearly $\ell_i\ge 3$.
Suppose that $\ell_i=3$. Then $\ell_{i-1}=2$, since otherwise the assumptions apply to $i-1$.
As $2q$ is even, there must be $k<i$ such that $\ell_k\ge 3$.
Let $k$ be the largest possible value with this property.
Then the assumptions apply to $k$, a contradiction.
Thus, if $\ell_{i+1}=0$ then $\ell_{i}\ge 4$. 
\end{proof}

Note that part (ii) of Lemma~\ref{lem:technicalLemma} means that in the sequence $L' = \mathcal{M}(L)$,
$\ell_{i}'\ge 3$ and $\ell_{i+1}' = 1$. The corresponding graph $H$ will thus have a single vertex of degree $3$ in the final layer.

\begin{example}
For an illustration, the sequence of sequences
$$L_m,\ \mathcal{M}(L_m),\  \mathcal{M}^2(L_m),\  \mathcal{M}^3(L_m),\  \ldots$$
for $2q=20$ is 
\begin{align*}
& (4, 8, 8), & & (4, 5, 6, 5), & & (3, 4, 5, 6, 2), & & (2, 3, 4, 7, 4), & & (2, 2, 3, 5, 7, 1), \\
& (4, 7, 9), & & (4, 4, 7, 5), & & (3, 4, 4, 7, 2), & & (2, 3, 4, 6, 5), & & (2, 2, 3, 5, 6, 2), \\
& (4, 6, 10), & & (3, 5, 7, 5), & & (3, 3, 5, 7, 2), & & (2, 3, 4, 5, 6), & & (2, 2, 3, 4, 7, 2), \\
& (4, 6, 9, 1), & & (3, 5, 6, 6), & & (2, 4, 5, 7, 2), & & (2, 3, 4, 4, 7), & & (2, 2, 3, 4, 6, 3), \\
& (4, 6, 8, 2), & & (3, 4, 7, 6), & & (2, 4, 5, 6, 3), & & (2, 3, 3, 5, 7), & & (2, 2, 3, 4, 5, 4), \\
& (4, 5, 9, 2), & & (3, 4, 6, 7), & & (2, 4, 4, 7, 3), & & (2, 2, 4, 5, 7), & & \text{etc.} \\
& (4, 5, 8, 3), & & (3, 4, 5, 8), & & (2, 3, 5, 7, 3), & & (2, 2, 4, 5, 6, 1), & & \\
& (4, 5, 7, 4), & & (3, 4, 5, 7, 1), & & (2, 3, 5, 6, 4), & & (2, 2, 4, 4, 7, 1), & &  
\end{align*}
There are altogether $67$ sequences since $D^m-D_m=66$ when $2q=20$.
\end{example}

\begin{lemma}
Let $t \geq 3$. There exist rooted trees $T_1$ and $T_2$ such that vertices $u_1$ and $u_2$ are Šolt{\'e}s vertices in
the graph $Q$ obtained from $G_t$ by attaching $T_1$ and $T_2$ to vertices $v_1$ and~$v_2$. 
\end{lemma}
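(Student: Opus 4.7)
The plan is to use Lemmas~\ref{lem:qranges} and~\ref{lem:sequenceGiver} to realise the number $f(t)$ as $\sum_i(3t+3+i)\ell_i$ for a suitable layer profile $(\ell_1,\ldots,\ell_d)$, build a forest with that profile, attach it to $G_t$, and then verify Šolt{\'e}sness of $u_1$ by a direct Wiener-index computation. Šolt{\'e}sness of $u_2$ will then follow by the symmetry of $G_t$.

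For $t\ge 3$, Lemma~\ref{lem:qranges} supplies a $q$ with $D_m\le f(t)\le D^m$. One application of $\mathcal{M}$ replaces $(\ell_i,\ell_{i+1})$ by $(\ell_i-1,\ell_{i+1}+1)$ and therefore increases $\sum_i(3t+3+i)\ell_i$ by exactly $1$, so the sequence $L=\mathcal{M}^{f(t)-D_m}(L_m)=(\ell_1,\ldots,\ell_d)$ produced by Lemma~\ref{lem:sequenceGiver} satisfies conditions (i)--(iii) of that lemma together with $\sum_{i=1}^d(3t+3+i)\ell_i=f(t)$. Condition (iii) allows every vertex of layer $i-1$ to be assigned at most two children, so the $\ell_i$ vertices of layer $i$ can actually be placed; any admissible split of the layer vertices between the two sides yields rooted trees $T_1$ at $v_1$ and $T_2$ at $v_2$ with combined layer profile $(\ell_1,\ldots,\ell_d)$. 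Let $Q$ denote the graph obtained by attaching $T_1$ and $T_2$ to $G_t$.

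For the Wiener-index verification, write $N_j=V(T_j)\setminus\{v_j\}$ and $\sigma_j=\sum_{x\in N_j}d_{T_j}(v_j,x)$, and partition pairs of vertices of $Q$ into those inside $V(G_t)$, those inside $N_1\cup N_2$, and cross pairs. Denoting the three contributions to $W(Q)$ by $W(G_t)$, $W_T$, $W_{GT}$ and the corresponding ones to $W(Q-u_1)$ by $W(G_t-u_1)$, $W_T$, $W_{GT}'$, the term $W_T$ is common to both because communication inside $N_1\cup N_2$ uses only $v_1,v_2$ and $d_{G_t}(v_1,v_2)$ does not involve $u_1$. Since every $x\in N_j$ reaches any $y\in V(G_t)$ only via $v_j$, a direct expansion gives
\begin{equation*}
W_{GT}-W_{GT}'=\sum_{j=1,2}\Bigl(|N_j|\,(w_{G_t}(v_j)-w_{G_t-u_1}(v_j))+\sigma_j\Bigr).
\end{equation*}
Assuming the distance-preservation identity $d_{G_t-u_1}(v_j,y)=d_{G_t}(v_j,y)$ for all $y\ne u_1$ (addressed below), each parenthesised difference equals $d_{G_t}(v_j,u_1)=3t+3$, and hence
\begin{equation*}
W(Q)-W(Q-u_1)=-f(t)+(3t+3)\sum_{i=1}^d\ell_i+\sum_{i=1}^d i\,\ell_i=-f(t)+\sum_{i=1}^d(3t+3+i)\ell_i=0,
\end{equation*}
so $u_1$ is a Šolt{\'e}s vertex of $Q$. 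The automorphism of $G_t$ that swaps $u_1\leftrightarrow u_2$ and fixes $v_1,v_2$ extends to $Q$ by acting as the identity on the attached trees, whence $W(Q-u_2)=W(Q-u_1)=W(Q)$ and $u_2$ is Šolt{\'e}s as well.

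The main obstacle is the identity $d_{G_t-u_1}(v_j,y)=d_{G_t}(v_j,y)$ for $y\ne u_1$, without which the cross-contribution formula picks up error terms and the cancellation with $f(t)$ is spoiled. The verification is local to the diamond $K_4-e$ containing $u_1$ and $u_2$: since $u_1$ and $u_2$ are the two degree-$3$ vertices of that diamond and each is adjacent to both of its degree-$2$ vertices, any shortest path from $v_j$ that traversed $u_1$ admits an equally short reroute through $u_2$, while shortest paths from $v_j$ to vertices in other diamonds go around the ring and never enter this particular diamond.
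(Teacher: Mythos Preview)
Your overall strategy matches the paper's: invoke Lemma~\ref{lem:qranges} to pick $q$, invoke Lemma~\ref{lem:sequenceGiver} to obtain a layer profile $(\ell_1,\dots,\ell_d)$ realising $f(t)$, build the forest, and conclude. Your explicit Wiener decomposition $W(Q)-W(Q-u_1)=-f(t)+\sum_i(3t+3+i)\ell_i$ is correct and in fact spells out what the paper leaves implicit from its earlier discussion. The extension of the $u_1\!\leftrightarrow\! u_2$ automorphism to $Q$ is also fine.

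The genuine problem is your last paragraph. You assert that ``$u_1$ and $u_2$ are the two degree-$3$ vertices of that diamond''. This is not what $u_1,u_2$ are. By definition they are the two vertices at \emph{maximum} distance $3t+3$ from $v_1$ (and from $v_2$); a direct count along the ring shows that these are the two \emph{connecting} vertices on the edge between the $t$-th and $(t{+}1)$-st diamonds, one in each diamond (cf.\ Figure~\ref{fig:gtgraph}). In particular they do not lie in the same copy of $K_4-e$, and your local rerouting argument ``replace $u_1$ by $u_2$ inside the diamond'' does not apply: if $r$ is an internal (original degree-$3$) vertex of diamond $t$, then the step $r\,u_1\,u_2$ has no length-$2$ detour avoiding $u_1$.

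Fortunately the identity $d_{G_t-u_1}(v_j,y)=d_{G_t}(v_j,y)$ you need is immediate for the right reason: since $d_{G_t}(v_j,u_1)=3t+3$ is the \emph{eccentricity} of $v_j$, no shortest $v_j$--$y$ path with $y\neq u_1$ can use $u_1$ as an interior vertex (that would force $d_{G_t}(v_j,y)>3t+3$). Hence some geodesic from $v_j$ to every $y\neq u_1$ already avoids $u_1$, and the identity follows. Replacing your final paragraph with this one-line eccentricity argument repairs the proof.
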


\begin{proof}
By Lemma~\ref{lem:qranges}, there exists $q$ such that $D_m \leq f(t) \leq D^m$. 
From Lemma~\ref{lem:sequenceGiver}, we obtain the sequence $L$, which gives us the appropriate number of vertices in 
every layer of the forest $T = T_1 \cup T_2$. This ensures that $v_1$ and $v_2$ are Šolt{\'e}s vertices in $Q$.

Now, we construct a graph $Q$ containing $G_t$ and realising $L$.
Let $T_j$ be the tree rooted at $v_j$, $1\le j\le 2$.
Then $T_1$ will have $\lceil\ell_i/2\rceil$ vertices at distance $i$ from $v_1$ and $T_2$ will have $\lfloor\ell_i/2\rfloor$ vertices at distance $i$ from $v_2$.
Observe that it is possible to construct both $T_1$ and $T_2$.
We have two possibilities.

\vspace{0.5\baselineskip}
\noindent
{\bf Case 1:} \emph{$\ell_i=2\ell_{i-1}$.}
Then either $\ell_i=2^{i+1}, \ell_{i-1}=2^i,\ldots,\ell_1=4$ and on the first $i$ levels both $T_1$ and $T_2$ are complete binary trees of height $i$; or $\ell_1=\ell_2=\dots=\ell_{i-1}=2$ and $\ell_i=4$, which means that both $T_1$ and $T_2$ contain one vertex at levels $1,2,\dots,i-1$ and two vertices at level $i$.

\vspace{0.5\baselineskip}
\noindent
{\bf Case 2:} \emph{$\ell_i\le 2\ell_{i-1}-1$.}
If $\ell_{i-1}$ is even then we can construct $i$-th level of both $T_1$ and $T_2$, and at least one vertex of level $i-1$ of $T_2$ will have degree less than $3$.
On the other hand, if $\ell_{i-1}$ is odd then $T_2$ has only $(\ell_{i-1}-1)/2$ vertices at level $i-1$.
However, it has $\lfloor\ell_i/2\rfloor$ vertices at level $i$  and $\lfloor\ell_i/2\rfloor\le \lfloor(2\ell_{i-1}-1)/2\rfloor=\ell_{i-1}-1=2\lfloor\ell_{i-1}/2\rfloor$, so in $T_2$, the number of vertices at level $i$ is at most twice the number of vertices at level $i-1$.
In this case, at least one vertex at level $i-1$ in $T_1$ will have degree less than $3$.
This concludes Case 2.
\end{proof}

We construct the trees $T_1$ and $T_2$ so that at each level we minimise the number of vertices of degree $3$.
Observe that then there is no level in which there are vertices of degree $1$ and also vertices of degree $3$.

Consider $v_1$ and $v_2$ as vertices of level $0$, and set $\ell_0=2$.
We plan to add edges within levels (i.e.\ `blue' edges) to create a cubic graph, but sometimes we must also add edges between consecutive levels (i.e.\ `red' edges).
First, we add necessary edges connecting vertices of different levels.
For every $i\ge 1$, if $\sum_{j=0}^i \ell_j$ is odd then add an edge joining a vertex (of degree $\le 2$) of $(i-1)$-th level with a vertex of $i$-th level.

\begin{lemma}
\label{lem:add_red}
It is possible to add edges to $Q$ as described above, so that the resulting graph has no parallel edges and it is subcubic.
\end{lemma}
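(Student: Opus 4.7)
The plan is to process the levels in order $i=1,2,\ldots,d$ and, whenever $\sum_{j=0}^{i}\ell_j$ is odd, insert one red edge between a vertex of current degree at most $2$ at level $i-1$ and one at level $i$. I would need to verify three points: (a) an unsaturated vertex exists at each of the two levels, (b) the chosen endpoints can be made non-adjacent in $Q$, and (c) the red edges produced by steps $i$ and $i+1$ do not overuse any common level-$i$ vertex.

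First I would quantify the free valency at each level. Because $T_1$ and $T_2$ are built so as to minimise the number of degree-$3$ vertices per level, the total free valency at level $i$ (with the convention $\ell_{d+1}=0$) satisfies
\begin{equation*}
\sum_{v\in V_i}\bigl(3-\deg_Q(v)\bigr) \;=\; 2\ell_i-\ell_{i+1},\qquad 1\le i\le d,
\end{equation*}
and the analogous level-$0$ count at $\{v_1,v_2\}$ is $4-\ell_1$. Each red edge incident to level $i$ consumes one unit of this free valency, and at most two red edges are incident to a fixed level $i$, one from step $i$ and one from step $i+1$.

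The heart of the argument is a parity/rigidity dichotomy on $\ell_{i+1}$. If $\ell_{i+1}=2\ell_i$, then Case~1 of the preceding proof forces the prefix $(\ell_0,\ldots,\ell_{i+1})$ to be either $(2,4,\ldots,2^{i+2})$ or $(2,2,\ldots,2,4)$; in both patterns $\sum_{j=0}^{i}\ell_j$ and $\sum_{j=0}^{i+1}\ell_j$ are even, so neither step $i$ nor step $i+1$ fires and the zero free valency causes no trouble. If $\ell_{i+1}=2\ell_i-1$, then $\ell_{i+1}$ is odd, so those two partial sums have opposite parity and exactly one of the two adjacent steps fires, matching the single free-valency unit. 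In every remaining case $2\ell_i-\ell_{i+1}\ge 2$, which suffices even when both adjacent steps fire. Hence free valency is never a bottleneck, and unsaturated endpoints always exist at each end of a demanded red edge.

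It remains to rule out parallel edges. An inserted $uw$ with $u\in V_{i-1}$ and $w\in V_i$ duplicates an existing edge of $Q$ only if $w$ is already a child of $u$ in $T_1\cup T_2$. Since $\ell_{i-1}\ge 2$ for every $1\le i\le d$ (and in the boundary case $\ell_d=1$ we even have $\ell_{d-1}\ge 3$ by Lemma~\ref{lem:technicalLemma}(ii)), the freedom in how the trees $T_1,T_2$ distribute their children among parents always leaves a non-parent candidate with free valency for $u$. The main obstacle is precisely the bookkeeping at the borderline $\ell_{i+1}\in\{2\ell_i,\,2\ell_i-1\}$, where a single level-$i$ vertex might otherwise be forced to absorb two red edges; the parity argument above is engineered exactly to preclude this, since those are the regimes in which fewer of the neighbouring steps actually fire.
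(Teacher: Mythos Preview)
Your argument is correct and follows essentially the same route as the paper. Both proofs process levels in order, invoke the rigidity of the case $\ell_{i+1}=2\ell_i$ (forcing an all-even prefix so no red edge is demanded), and then verify that in the remaining cases enough free valency survives at level $i$ to accommodate the at most two red edges incident to that level; your packaging via the explicit count $2\ell_i-\ell_{i+1}$ and the trichotomy $\{0,1,\ge 2\}$ is just a cleaner reformulation of the paper's split into ``no red edge already at $(i-2,i-1)$'' versus ``one red edge already there''. The only place where you are noticeably thinner than the paper is the parallel-edge discussion: the paper spells out the borderline situations (e.g.\ $\ell_{i-1}=3$ or $\ell_{i-1}=2,\ \ell_i=3$) where the unique available $x$ in one tree would otherwise be the parent of $y$, and explains how to reroute; your appeal to ``freedom in how $T_1,T_2$ distribute their children'' is correct in spirit but would benefit from naming those cases explicitly.
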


\begin{proof}
We add the red edges step by step starting with level $1$, together with creating the trees $T_1$ and $T_2$.
And we show that at each level it is possible to add a required edge.
We distinguish two cases:

\vspace{0.5\baselineskip}
\noindent
{\bf Case 1:} \emph{There is no red edge between levels $i-2$ and $i-1$.}
If $2\ell_{i-1}=\ell_i$, then either (a) $\ell_1=\ell_2=\dots=\ell_{i-1}=2$ and $\ell_i=4$, or (b) $\ell_j=2^{j+1}$ for all $j\le i$.
In both subcases $\sum_{j=0}^i\ell_j$ is even, and no red edge is added between levels $i-1$ and $i$.
Suppose that $2\ell_{i-1}>\ell_i$.
Then there is a vertex at $(i-1)$-st level, say $x$, whose degree is less than $3$.
If $2\ell_i>\ell_{i+1}$, then there is a vertex at $i$-th level, say $y$, whose degree is also less than $3$, and we can add the edge $xy$.
(Observe that if we add these additional edges together with the construction of trees $T_1$ and $T_2$, then we do not create parallel edges.
The only problem occures when $x$ is the unique vertex at level $i-1$ in $T_j$ and $y$ is also in $T_j$.
But then either $\ell_{i-1}=3$ and $j=2$, in which case $x$ can be chosen in $T_1$, or $\ell_{i-1}=2$ and $\ell_i=3$ in which case $x$ can be chosen in $T_2$ and $y$ in $T_1$.)
On the other hand, if $2\ell_i=\ell_{i+1}$ then (recall that $2\ell_{i-1}>\ell_i$) $\ell_1=\dots=\ell_{i-1}=\ell_i=2$ and $\ell_{i+1}=4$, so $\sum_{j=0}^i\ell_j$ is even, and no red edge is added between levels $i-1$ and $i$.

\vspace{0.5\baselineskip}
\noindent
{\bf Case 2:} \emph{There is a red edge between levels $i-2$ and $i-1$.}
Then $\sum_{j=0}^{i-1}\ell_j$ is odd.
Assume that we also have to add a red edge between levels $i-1$ and $i$.
Then $\sum_{j=0}^i\ell_j$ is also odd which means that $\ell_i$ is even and that $2\ell_{i-1}>\ell_i$, as shown in Case~1.
Hence, $2\ell_{i-1}>\ell_i+1$, so there is a vertex at $(i-1)$-st level, say $x$, whose degree is less than $3$.
As $2\ell_i>\ell_{i+1}$, there is a vertex at $i$-th level, say $y$, whose degree is also less than $3$, and we can add the edge $xy$.
(Multiple edges can be avoided analogously as in Case~1.)
\end{proof}

\vspace{0.5\baselineskip}
We remark that we did not precisely specify how to choose the vertices $x$ and $y$, when a red edge is add between levels $i-1$ and $i$ in case
there are several possibilities. 
Here are a few simple rules to follow when choosing $x$ or $y$ at level $i$:
\begin{enumerate}[label=(\roman*)]
\item if there are at least three leaves (in $T_1 \cup T_2$) at level $i$, then do not choose these leaves;
\item if there is eactly one leaf $w$ at level $i$, $w \in V(T_j)$, then there must be a protected degree-2 vertex at level $i$ in $T_{3-j}$ (i.e.\ the protected vertex shall not be chosen);
\item if there are two leaves $w$ and $w'$ at level $i$ (one of them is in $T_1$ and the other in $T_2$, as we will prove later), 
then one degree-$2$ vertex from $T_1$ and one degree-$2$ vertex from $T_2$ has to be protected;
\item if there are no leaves at level $i$, we have no constraints.
\end{enumerate}
The above rules will be fully justified later, when we will consider $2$-connectivity of the resulting graph $H$.
Denote by $R$ the graph obtained after adding red edges, as described above.
We have the following statement.

\begin{lemma}
\label{lem:degrees}
In each level of $R$, the sum of free valencies is even.
\end{lemma}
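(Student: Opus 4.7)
The plan is a direct parity count via the handshaking lemma applied to each level. Let $E_i$ denote the number of edges of $R$ between level $i-1$ and level $i$, so $E_i = \ell_i + r_i$ where $r_i \in \{0,1\}$ is the number of red edges between these levels. By the rule for adding red edges, $r_i = 1$ if and only if $s_i := \sum_{j=0}^{i} \ell_j$ is odd, with the conventions $\ell_0 = 2$ and $\ell_{d+1} = r_{d+1} = 0$. Note that $s_d = 2 + 2q$ is even by Lemma~\ref{lem:sequenceGiver}(i), so automatically $r_d = 0$, consistent with the convention.

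First I would dispatch level $0$. In $R$, the two vertices $v_1, v_2$ each carry one edge into $G_t$, contributing $2$ to the degree sum, and together they are incident to $E_1$ edges going up to level $1$. Hence the sum of free valencies at level $0$ equals $6 - 2 - E_1 = 4 - E_1$, which has parity $\ell_1 + r_1 \pmod 2$. Using $\ell_0 = 2$, the definition of $r_1$ gives $r_1 \equiv \ell_1 \pmod 2$, so this parity vanishes.

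Next, for every $i$ with $1 \leq i \leq d$, every edge of $R$ incident to a level-$i$ vertex goes either to level $i-1$ or to level $i+1$, so the sum of degrees at level $i$ equals $E_i + E_{i+1}$. Consequently, the sum of free valencies at level $i$ is $3\ell_i - E_i - E_{i+1}$, whose parity is
\[
\ell_i + E_i + E_{i+1} \equiv \ell_i + (\ell_i + r_i) + (\ell_{i+1} + r_{i+1}) \equiv r_i + r_{i+1} + \ell_{i+1} \pmod{2}.
\]
Since $s_{i+1} - s_i = \ell_{i+1}$, the definition of the red edges yields $r_i + r_{i+1} \equiv \ell_{i+1} \pmod 2$, so the displayed expression collapses to $2\ell_{i+1} \equiv 0 \pmod 2$, as required.

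The whole argument is essentially a short congruence manipulation; the main care needed is the correct bookkeeping of the two external edges at $v_1, v_2$ in level $0$ and the boundary case $i = d$. No substantive obstacle is anticipated, since the rule for placing red edges is designed precisely so that the parities telescope correctly.
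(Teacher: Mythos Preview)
Your argument is correct and follows essentially the same approach as the paper: both compute $3\ell_i$ minus the number of edges from level $i$ to levels $i\pm 1$ and check the parity. The paper does this via four explicit subcases on the parities of $\sum_{j\le i}\ell_j$ and $\ell_{i+1}$, whereas you compress the same computation into the single congruence $r_i+r_{i+1}\equiv \ell_{i+1}\pmod 2$; you also handle level~$0$, which the paper omits (harmlessly, since the subsequent construction only needs levels $1\le i\le d$).
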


\begin{proof}
Let $1\le i\le d$.
We prove the statement for level $i$.
So denote by $a_1,a_2,\dots, a_{\ell_i}$ the vertices at $i$-th level.
Our task is to show that $\sum_{j=1}^{\ell_i}(3-\deg_R(a_j))$ is even.
We distinguish two cases, with two subcases each.

\vspace{0.5\baselineskip}
\noindent
{\bf Case 1:} \emph{$\sum_{j=0}^i \ell_j$ is odd.}
Then there are $\ell_i+1$ edges between levels $\ell_{i-1}$ and $\ell_i$ in $R$.
If $\ell_{i+1}$ is odd, then $\sum_{j=0}^{i+1}\ell_j$ is even, so there are $\ell_{i+1}$ edges between levels $i$ and $i+1$.
Hence, $\sum_{j=1}^{\ell_i}(3-\deg_R(a_j))=3\ell_i-\ell_i-1-\ell_{i+1}$ is even.
On the other hand, if $\ell_{i+1}$ is even, then $\sum_{j=0}^{i+1}\ell_j$ is odd, so there are $\ell_{i+1}+1$ edges between levels $i$ and $i+1$.
Hence, $\sum_{j=1}^{\ell_i}(3-\deg_R(a_j))=3\ell_i-\ell_i-1-\ell_{i+1}-1$ is even.

\vspace{0.5\baselineskip}
\noindent
{\bf Case 2:} \emph{$\sum_{j=0}^i \ell_j$ is even.}
Then there are $\ell_i$ edges between levels $\ell_{i-1}$ and $\ell_i$ in $R$.
If $\ell_{i+1}$ is odd, then $\sum_{j=0}^{i+1}\ell_j$ is odd, so there are $\ell_{i+1}+1$ edges between levels $i$ and $i+1$.
Hence, $\sum_{j=1}^{\ell_i}(3-\deg_R(a_j))=3\ell_i-\ell_i-\ell_{i+1}-1$ is even.
On the other hand, if $\ell_{i+1}$ is even, then $\sum_{j=0}^{i+1}\ell_j$ is also even, so there are $\ell_{i+1}$ edges between levels $i$ and $i+1$.
Hence, $\sum_{j=1}^{\ell_i}(3-\deg_R(a_j))=3\ell_i-\ell_i-\ell_{i+1}$ is even.
\end{proof}

By Lemma~{\ref{lem:degrees}}, the sum of free valencies is even at each level of $R$.
This means that, in general, after we add some edges connecting vertices within level
$i$, and when we do that for all $i$, $1\le i\le d$, the resulting graph $H$
will be cubic.
We now describe how to add these `blue' edges, so that $H$ will be $2$-connected,
and how to resolve the cases when a level has small number of remaining degree-$2$ vertices.

\begin{observationx}
$H$ will be $2$-connected if 
for every leaf $x$ of $T$, say $x\in
V(T_k)$, where $1\le k\le 2$ and $x$ is a vertex at level $i$, there is a
path, say $P$, containing only the vertices of level $i$ and connecting $x$
with a vertex, say $y$, of $T_{3-k}$.
\end{observationx}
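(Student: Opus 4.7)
My plan is to verify $2$-connectivity of $H$ by showing that $H-v$ is connected for every $v\in V(H)$, stratified by the location of $v$ inside the decomposition $V(H)=V(G_t)\cup V(T_1)\cup V(T_2)$. As a preliminary step I would record that $G_t$ has precisely two cut vertices, namely the two neighbours of the pendants $v_1,v_2$ in the auxiliary $4$-cycle; denote them $c_1^*$ and $c_2^*$. Every other vertex of $G_t$ is non-separating inside $G_t$.

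Two sub-cases are immediate. If $v\in V(G_t)\setminus\{c_1^*,c_2^*,v_1,v_2\}$, then $G_t-v$ is connected and the forest $T_1\cup T_2$, together with the red and blue edges, is attached to it through $v_1$ and $v_2$; hence $H-v$ is connected. If $v$ is a leaf of $T$, then $T-v$ is still a forest whose roots $v_1,v_2$ lie in $V(G_t)$, so once again $H-v$ is connected.

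The remaining cases rest on one construction extracted directly from the hypothesis. For each leaf $x$ of $T_k$ at level $i\geq 1$, let $P_x$ be the promised level-$i$ path from $x$ to some $y=y(x)\in V(T_{3-k})$, and let $R_x$ be the concatenation of $P_x$ with the tree-path in $T_{3-k}$ from $y$ up to $v_{3-k}$. Then $R_x$ is a walk from $x$ to $v_{3-k}$ whose only vertex in $V(G_t)$ is its endpoint $v_{3-k}$ and whose only vertex in $V(T_k)$ is its starting point $x$. With $R_x$ in hand I handle: (a) $v\in\{c_1^*,c_2^*\}$, by taking any leaf $x$ of $T_1$ and concatenating the tree-path $v_1\to x$ with $R_x$, which produces a $v_1$-to-$v_2$ walk avoiding $V(G_t)\setminus\{v_1,v_2\}$ and hence avoiding $v$; (b) $v=v_k$, by observing that each component $C_j$ of $T_k-v_k$ contains at least one leaf $x\in C_j$, for which $R_x$ attaches $C_j$ to $v_{3-k}\in G_t-v$ without passing through $v_k$; (c) $v$ internal to $T_k$ at some level $i_0\geq 1$, by noting that the upper subtree of $T_k-v$ is attached to $G_t$ through $v_k$, while each lower subtree $C_j$ rooted at a child of $v$ contains a leaf $x$ at some level $i'\geq i_0+1$, so $P_x$ lies strictly below level $i_0$ and $R_x$ attaches $C_j$ to $v_{3-k}$ without revisiting $v$.

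The main point of care I would anticipate is the verification in case~(c) that the walk $R_x$ truly avoids $v$: one must use both that $P_x$ lives at a strictly deeper level than $v$, and that the tree-path portion of $R_x$ lies entirely in $T_{3-k}$, which is disjoint from $V(T_k)\ni v$. This level-separation argument is the reason that a single-level blue path, as guaranteed by the hypothesis, already suffices to ensure $2$-connectivity, rather than a more elaborate connectivity requirement on the blue edges.
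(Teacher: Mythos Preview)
Your argument is correct. One small overstatement: you assert that the only vertex of $R_x$ lying in $V(T_k)$ is $x$ itself, but the level-$i$ path $P_x$ may well pass through other level-$i$ vertices of $T_k$. Fortunately you never actually use this; in each of cases (a)--(c) you only need $R_x$ to avoid the particular vertex $v$ under consideration, and for that it suffices that $P_x$ stays at a fixed level $i'$ (hence avoids $v$, which is either in $G_t$ or at a strictly smaller level) while the tree-path portion lies entirely in $T_{3-k}$.

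The paper's justification is much terser and has a different flavour. It simply notes that for each leaf $x\in V(T_k)$ the path $P$ can be closed up into a cycle by appending the $(v_k,x)$-path in $T_k$, the $(v_{3-k},y)$-path in $T_{3-k}$, and a $(v_1,v_2)$-path inside $G_t$; since every such cycle meets $G_t$ in at least three vertices, $H$ is $2$-connected. This is essentially an ear-decomposition sketch (start from the $2$-connected graph $G_t-v_1-v_2$ and attach ears covering all of $T$), with the details left to the reader. Your vertex-by-vertex check that $H-v$ is connected is more elementary and more self-contained; it trades brevity for an argument that does not rely on the reader completing the passage from ``every leaf lies on a cycle through $G_t$'' to global $2$-connectivity.
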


We refer to the above as the \emph{$2$-connectivity condition}.
The reason is that $P$ can be completed to a cycle using a $(v_k, x)$-path in
$T_k$ and a $(v_{3-k},y)$-path in $T_{3-k}$.
Since all cycles constructed in this way contain three vertices of $G_t$,
the resulting graph $H$ will be $2$-connected.
We remark that in one special case the path $P$ will contain vertices of levels $i$
and $i+1$, but it will still be possible to complete $P$ to a cycle
containing three vertices of $G_t$.
Thus, our attention will be focused on the leaves of $T$.

\begin{lemma}
\label{lem:final}
It is possible to add edges to $R$ so that the resulting graph $H$ will be cubic and $2$-connected.
\end{lemma}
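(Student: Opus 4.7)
The plan is to add the blue edges level by level so that (a) every vertex with positive free valency in $R$ achieves degree $3$, and (b) the $2$-connectivity condition stated in the Observation above holds. By Lemma~\ref{lem:degrees}, the sum of free valencies within each level is even, so at every level $i$ the required blue edges (two incident to each leaf of $T$ and one incident to each degree-$2$ vertex of $R$ at level $i$) can always be realised as a disjoint union of blue paths and cycles in which the path endpoints are the free-valency-$1$ vertices and all other traversed vertices are leaves of~$T$. The task therefore reduces to choosing these paths and cycles so that every leaf at level $i$ lies on a blue component that also passes through a vertex of the \emph{opposite} tree.

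I proceed by case analysis on level $i$, exploiting the observation made just after the tree construction that no level contains both a degree-$1$ and a degree-$3$ vertex of~$R$. Hence at each level, either no vertex has free valency $2$ (no leaves of $T$ at that level) or every vertex has free valency $\ge 1$. In a level without leaves of $T$, all free valencies equal $1$, so we need only a perfect matching on these vertices, which exists by Lemma~\ref{lem:degrees}, and the $2$-connectivity condition imposes nothing here. In a level containing at least one leaf in each of $T_1$ and $T_2$, we form a single blue path or cycle that interleaves leaves and non-leaves so that every leaf of $T_k$ is linked by the blue-path traversal through level $i$ to some vertex of $T_{3-k}$. In a level where leaves of $T$ appear only on one side, the rules (i)--(iv) for selecting the endpoints of red edges guarantee that a degree-$2$ vertex of $T_{3-k}$ at level $i$ has been \emph{protected}; we take this vertex as the other endpoint of a blue path starting at the lonely leaf.

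The main obstacle is the exceptional case singled out in the remark preceding the lemma, in which the blue path must exit level $i$ and continue into level $i+1$ via a red edge. It arises in the small configurations where, at level $i$ alone, no suitable free-valency-$1$ partner on the opposite side is available; here the parity considerations of Lemma~\ref{lem:degrees} force a red edge between levels $i$ and $i+1$, and combining this red edge with the tree path down through $T_{3-k}$ to level $i+1$ supplies the required detour. The resulting cycle still contains $v_1$, $v_2$, and a $(v_1,v_2)$-path inside $G_t$, which is exactly what the $2$-connectivity condition demands. Ruling out further degenerate shapes of the level sequence $L$ using Lemma~\ref{lem:technicalLemma} (in particular, that whenever the deepest level is reached with $\ell_{i+1}=0$ one has $\ell_i\ge 3$) shows that the construction succeeds in every level, and so $H$ is cubic and $2$-connected.
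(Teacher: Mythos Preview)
Your outline follows the same overall strategy as the paper --- add blue edges level by level, treat leaf-free levels by a matching, and route a blue path/cycle through the leaves so that each leaf sees the opposite tree --- but it leaves a genuine gap at the last level(s), and this is precisely where the real work of the lemma lies.

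First, your appeal to Lemma~\ref{lem:technicalLemma} is mistaken. Part~(ii) of that lemma says that \emph{before} a modification creates a new deepest level, one has $\ell_i\ge 4$; after the modification the new tail is $(\ell_i',\ell_{i+1}')=(\ell_i-1,1)$. So the final sequence $L$ may perfectly well end with $\ell_d=1$ or $\ell_d=2$, and nothing in Lemma~\ref{lem:technicalLemma} rules this out. Your sketch silently assumes $\ell_d\ge 3$ so that a single blue cycle through level~$d$ exists; when $\ell_d\in\{1,2\}$ this fails, and you give no alternative. The paper deals with $\ell_d=1$ by passing to an auxiliary sequence $L^*$ with last entry $3$, building the graph for $L^*$, and then contracting the three last-level vertices; and with $\ell_d=2$ by handling levels $d$ and $d-1$ jointly, in one subcase sending \emph{two} blue edges from a leaf $x$ at level $d-1$ down to the two vertices at level~$d$. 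None of this machinery appears in your proposal.

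Second, your description of the ``exceptional case singled out in the remark'' is off. The special path that uses two consecutive levels does not arise from following a red edge out of level~$i$; it is exactly the $\ell_d=2$, $\ell_{d-1}\ge 3$ situation just described, where a leaf $x$ at level $d-1$ is joined by two new (blue) edges to both level-$d$ vertices, and the $2$-connectivity cycle for $x$ detours through level~$d$. Your parity argument (``Lemma~\ref{lem:degrees} forces a red edge between levels $i$ and $i+1$'') does not produce this configuration and does not resolve it.

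Finally, even at intermediate levels your ``form a single blue path or cycle that interleaves leaves and non-leaves'' is too coarse: with only one or two leaves at level~$i$ you cannot form a cycle through them, and one must instead attach specific edges $ab_2$ (one leaf) or $a_1b_2,\,a_2b_1$ (two leaves) to designated non-leaf vertices $b_1\in T_1$, $b_2\in T_2$, which is exactly why the rules (i)--(iv) protect those vertices from receiving red edges. Without making this explicit you have not shown the construction goes through.
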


\begin{proof}
First, we consider the $d$-th (i.e., the last) level.
Note that all the vertices at level $d$ are leaves of $T$.
Since $\sum_{j=1}^d \ell_j=2q$ is even, each vertex at level $d$ has
degree $1$ in $R$.
We distinguish three cases.

\vspace{0.5\baselineskip}
\noindent
{\bf Case 1:} \emph{$\ell_d\ge 3$.}
In this case, we add to graph $R$ a cycle passing through all the vertices of level
$d$.
Then the vertices of level $d$ will have degree $3$ and they will satisfy
the $2$-connectivity condition, since $\lceil \ell_d/2\rceil$ vertices of
level $d$ are in $T_1$ and $\lfloor\ell_d/d\rfloor$ of them are in~$T_2$.

\vspace{0.5\baselineskip}
\noindent
{\bf Case 2:} \emph{$\ell_d=1$.}
Then $\ell_{d-1}\ge 3$, as already shown.
In this case, we replace the sequence $L=(\ell_1,\ell_2,\dots,\ell_{d-1},1)$
by $L^*=(\ell_1,\ell_2,\dots,\ell_{d-1},3)$ and we find a 2-connected cubic
graph $H^*$ realizing $L^*$.
In this graph, the pendant vertices of level $d$ are connected to three
different vertices of level $d-1$ in $T$, since at each level we minimised the number of degree-$3$ vertices.
Since $\sum_{j=1}^d \ell_j$ is even, there are no other edges connecting
vertices of level $d-1$ with those of level $d$ in $R$.
Hence, we add a $3$-cycle as described in Case~1, and then contract the
three vertices at level $d$ to a single vertex.
If $H^*$ is cubic and $2$-connected, then so is the resulting graph $H$.

\vspace{0.5\baselineskip}
\noindent
{\bf Case 3:} \emph{$\ell_d=2$.}
In this case, we simultaneously resolve the problem for levels $d$ and $d-1$.
Since $\sum_{j=1}^{d-1} \ell_j$ is even, all vertices of level $d-1$
have degree $1$ except for two, which have degree $2$.
(Recall that $T$ was constructed so that at each level the number of 
vertices of degree $3$ was minimised.)

If $\ell_{d-1}=2$, then connect both vertices of level $d-1$ with both
vertices of level $d$ and add an edge connecting the vertices of level $d$.
Then the vertices of levels $d-1$ and $d$ have degree $3$ and they satisfy
the 2-connectivity condition.

If $\ell_{d-1}\ge 3$, then pick a vertex of degree $1$ at level $d-1$, say
$x$,  and join it to both vertices of level $d$.
Then $x$ has degree $3$, but since it is a leaf of $T$, it does not satisfy
the $2$-connectivity condition in the strict sense.
Nevertheless, there is a cycle in $H$ which contains edges of $G_t$, a path
connecting $v_1$ with a vertex of level $d$ in $T_1$, a path connecting
$v_2$ with a vertex of level $d$ in $T_2$, and the two edges connecting $x$
with the vertices of level $d$, which is sufficient.
Then add to $H$ the edge connecting vertices of level $d$ and add a path
passing through all vertices of level $d-1$ except $x$, and starting/ending
in the two degree-$2$ vertices.
This resolves the problem for levels $d$ and $d-1$. This concludes Case~3.

\vspace{0.5\baselineskip}
\noindent
We now turn to level $i$, $1\le i<d$. In case $\ell_d=2$, we assume
$i<d-1$.
Then vertices of level $i$ are connected to vertices of levels $i-1$ and
$i+1$ using only the edges of $R$, and we now add only edges connecting
vertices within level $i$, i.e.\ the blue edges.

In some cases, we specify positions of red edges that were added to $T$ to form
$R$, to justify the four rules for choosing vertices $x$ and $y$ in the process of
creating $R$.

If there is no leaf at level $i$, then all vertices of this level
have degrees $2$ and $3$ in $R$.
By Lemma~{\ref{lem:degrees}}, there is an even number of degree-$2$ vertices.
Thus, we can add a collection of independent edges so that all vertices of
level $i$ will have degree $3$.
Since there were no leaves, the vertices of level $i$ satisfy the
$2$-connectivity condition.

Now suppose that there are leaves at level $i$.
Since we minimised the number of vertices of degree $3$ when constructing $T$, there are no 
vertices of degree $3$ in level $i$.
Consequently, each vertex of level $i$ is connected to at most one vertex in level $i+1$ in $T$.
Hence, $T_1$ and $T_2$ have, respectively, $\lceil\ell_i/2\rceil-\lceil\ell_{i+1}/2\rceil$ and $\lfloor\ell_i/2\rfloor-\lfloor\ell_{i+1}/2\rfloor$ leaves at level $i$.
Denote
$k=(\lceil\ell_i/2\rceil-\lceil\ell_{i+1}/2\rceil)-(\lfloor\ell_i/2\rfloor-\lfloor\ell_{i+1}/2\rfloor)$.
Since
$k=(\lceil\ell_i/2\rceil-\lfloor\ell_i/2\rfloor)-(\lceil\ell_{i+1}/2\rceil-\lfloor\ell_{i+1}/2\rfloor)$,
we have
\begin{equation}
\label{eq:2}
-1\le k\le 1.
\end{equation}
This means that the numbers of leaves at level $i$ in $T_1$ and
$T_2$ differ by at most one, and also that there are no degree-$3$ vertices at level $i$ in $T$.
Moreover, since $i<d$, level $i$ contains two vertices, say $b_1$ and $b_2$,
such that $b_1\in V(T_1)$, $b_2\in V(T_2)$ and $b_1,b_2$ are not leaves in $T$.
(Recall that if $i=d-1$ and $\ell_d=1$, then then we solve this case for $L^*$ where $\ell_d=3$, and afterwards we provide the contraction of vertices at level $d$, see Case~2 above.)
Then $\deg_T(b_1)=\deg_T(b_2)=2$.
We distinguish three cases.

\vspace{0.5\baselineskip}
\noindent
{\bf Case~1:}
\emph{$T$ has at least $3$ leaves at level $i$.}
If $E(R) \setminus E(T)$ contains an edge connecting levels $i-1$ and $i$, then this
edge will terminate at $b_1$, and if $E(R) \setminus E(T)$ contains an edge connecting
levels $i$ and $i+1$, then this edge will start at $b_2$.
(Note that if we create $T$ and $R$ simultaneously, level by level, then we can form $b_1$ and $b_2$, so that we do not get parallel edges.
In the worst case we relabel $b_1$ and $b_2$, so that $b_1\in V(T_2)$ and $b_2\in V(T_1)$.)
This leaves the leaves untouched.
Then we add a cycle passing through all leaves of level $i$ and add
a collection of independent edges so that all vertices of level $i$ become degree-$3$ vertices.
Since, at level $i$, at least one leaf is in $T_1$ and at least one is in $T_2$, the vertices at level $i$ satisfy the $2$-connectivity
condition.

\vspace{0.5\baselineskip}
\noindent
{\bf Case~2:}
\emph{$T$ has exactly two leaves at level $i$.}
Denote these vertices by $a_1$ and $a_2$.
As mentioned above, we may assume that $a_1\in V(T_1)$ and $a_2\in V(T_2)$.
If $E(R) \setminus E(T)$ contains an edge connecting levels $i-1$ and $i$, then this
edge will terminate at $a_1$, and if $E(R) \setminus E(T)$ contains an edge connecting
levels $i$ and $i+1$, then this edge will start at $a_2$.
(Again, not to create parallel edges, the red edge between levels $i-1$ and $i$ may be connected to $a_2$ instead of $a_1$, and then possible red edge between levels $i$ and $i+1$ will start at $a_1$.)
Then add edges $a_1b_2$, $a_2b_1$, and a collection of independent edges so
that all vertices of level $i$ become degree-$3$ vertices.
Due to the presence of edges $a_1b_2$ and $a_2b_1$, the vertices at level $i$ satisfy the
$2$-connectivity condition.

\vspace{0.5\baselineskip}
\noindent
{\bf Case~3:}
\emph{$T$ has exactly one leaf at level $i$.}
Denote this vertex by $a$.
Without loss of generality, assume that $a\in V(T_1)$.
If $E(R) \setminus E(T)$ contains an edge connecting levels $i-1$ and $i$, then this
edge will terminate at $b_1$, and if $E(R) \setminus E(T)$ contains an edge connecting
levels $i$ and $i+1$, then this edge will start at $a$.
(Not to create parallel edges, the red edge between levels $i-1$ and $i$ may be connected to $a$ instead of $b_1$, and then possible red edge between levels $i$ and $i+1$ will start at $b_1$.)
Then add the edge $ab_2$, and a collection of independent edges so
that all vertices of level $i$ become degree-$3$ vertices.
Due to the presence of edge $ab_2$, vertices at level $i$ satisfy the $2$-connectivity
condition.
\end{proof}

\section{Cubic \texorpdfstring{$\boldsymbol{2}$}{2}-connected graphs with \texorpdfstring{$\boldsymbol{2^r}$}{2\^{}r} Šolt{\'e}s vertices}
\label{sec:general}

Now we generalise Theorem~{\ref{thm:2}} to higher amount of Šolt{\'e}s vertices.

\begin{theorem}
\label{thm:many}
Let $r\ge 1$.
There exist infinitely many cubic $2$-connected graphs $G$ which contain at least $2^r$ Šolt{\'e}s vertices.
\end{theorem}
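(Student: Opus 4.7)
The plan is to extend the construction of Theorem~\ref{thm:2} by replacing the base graph $G_t$ with a larger graph $G_{t,r}$ that carries $2^r$-fold symmetry with respect to a pair of leaves, rather than the mere $2$-fold symmetry of $G_t$. In $G_t$, the two Šolt{\'e}s candidates $u_1,u_2$ were the orbit of a single non-trivial automorphism fixing $\{v_1,v_2\}$; we want a base graph whose leaf-preserving automorphism group has an orbit of size $2^r$ consisting of vertices at maximum distance from $\{v_1,v_2\}$, so that the same tree-attachment procedure promotes all $2^r$ of them to Šolt{\'e}s vertices simultaneously.

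Concretely, I would define $G_{t,r}$ by taking $2^{r-1}$ copies of the diamond ring from $G_t$ and gluing them at a small central gadget that retains the two leaves $v_1, v_2$ (analogous to the $4$-cycle with subdivision vertices $z_1,z_2$ in $G_t$). The gluing is arranged so that (i)~$G_{t,r}$ admits an automorphism group of order at least $2^r$ fixing $\{v_1,v_2\}$ setwise and acting transitively on a distinguished set $U=\{u_1,\dots,u_{2^r}\}$ of vertices at the common maximum distance from $\{v_1,v_2\}$; and (ii)~only $v_1,v_2$ have degree less than $3$, so that the tree-attachment machinery still applies.

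By transitivity on $U$, the quantity $f_r(t) := W(G_{t,r}-u_i)-W(G_{t,r})$ is independent of the choice of $u_i\in U$. A bookkeeping computation analogous to the one giving $f(t)=16t^3-8t^2-26t-14$ for $r=1$ shows that $f_r(t)$ is a polynomial in $t$ with positive leading coefficient, so $f_r(t)\to\infty$ as $t\to\infty$. With this in hand, the rest of the argument is a verbatim reuse of Lemmas~\ref{lem:qranges}--\ref{lem:final}: the analogue of Lemma~\ref{lem:qranges} produces a $q$ with $D_m\le f_r(t)\le D^m$ for large enough $t$; Lemmas~\ref{lem:sequenceGiver} and~\ref{lem:technicalLemma} are purely about the operation $\mathcal{M}$ on integer sequences and need no modification; and Lemmas~\ref{lem:add_red},~\ref{lem:degrees}, and~\ref{lem:final} depend only on the existence of exactly two free valencies in the base graph. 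The resulting extension $H$ is cubic and $2$-connected. Since all added vertices lie in the trees rooted at $v_1,v_2$, their total distance contribution to every $u_i$ is the same by symmetry, so every $u_i$ is simultaneously a Šolt{\'e}s vertex of $H$.

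The main obstacle is the explicit design of the central gluing gadget for $G_{t,r}$: it must connect $2^{r-1}$ diamond rings with enough symmetry to make $\{u_1,\dots,u_{2^r}\}$ a single automorphism orbit at a common maximum distance from $\{v_1,v_2\}$, while still leaving precisely two free valencies. Parity and distance constraints at the seams may require inserting a small number of additional diamond gadgets as padding, and one must verify that $f_r(t)$ grows slowly enough compared with $D^m(q,t)\sim q^2$ that the realisability range still contains $f_r(t)$ for infinitely many $t$. Once a valid core is built, the remainder of the argument is a routine transcription of the proof of Theorem~\ref{thm:2}.
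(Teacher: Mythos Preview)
Your proposal is correct and follows essentially the same approach as the paper. The paper resolves the ``main obstacle'' you identify with an explicit gadget: two binary trees $B,B'$ of depth $r-1$ whose $2^{r-1}$ leaves are identified with the endpoints of $2^{r-1}$ chains of $2t$ diamonds, and whose roots are joined to $z_1,z_2$ of the $8$-vertex graph $F$; it then shows $f_r(t)\to\infty$ not by computing a polynomial but by bounding $w_{G_{t,r}}(u_1)$ above by a quadratic in $t$ and the distance-increase sum below by $2\binom{3t+1}{3}$, after which the tree-attachment machinery is invoked exactly as you describe.
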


\begin{proof}
We reconsider the graph $G_t$ from the proof of Theorem~{\ref{thm:2}}.
This graph consists of a chain of $2t$ diamonds attached to vertices $z_1$ and $z_2$ of a graph on $8$ vertices.
Denote this graph on $8$ vertices by $F$.

We construct $G_{t,r}$.
Take a binary tree $B$ of depth $r-1$.
This tree has $2^r-1$ vertices, out of which $2^{r-1}$ are leaves.
Denote these leaves by $a_1,a_2,\dots,a_{2^{r-1}}$.
Let $B'$ be a copy of $B$.
To distinguish endvertices of $B'$ from those of $B$, put to the endvertices of $B'$ dashes.
Now take $2^{r-1}$ chains of $2t$ diamonds and identify the ends (the vertices of degree $2$) of $k$-th chain with $a_k$ and $a'_k$, respectively.
Finally, join the roots of $B$ and $B'$ (i.e., the vertices of degree $2$) by edges to $z_1$ and $z_2$.

\begin{figure}[!htb]
\centering
\begin{tikzpicture}[scale=1.2]
\tikzstyle{edge}=[draw,thick]
\tikzstyle{every node}=[draw, circle, fill=blue!50!white, inner sep=1.5pt]
\foreach \i in {1,...,6} { 
	\pgfmathsetmacro{\kot}{360 * \i / 7}
	\coordinate (c\i) at (\kot:1.3);
	\node (a\i_1) at ($ (c\i) + (\kot:0.25) $) {};
	\node (a\i_2) at ($ (c\i) + (\kot:-0.25) $) {};
	\node (a\i_3) at ($ (c\i) + ({\kot + 90}:0.4) $) {};
	\node (a\i_4) at ($ (c\i) + ({\kot + 90}:-0.4) $) {};
	\path[edge] (a\i_1) -- (a\i_2) -- (a\i_3) -- (a\i_1) -- (a\i_4) -- (a\i_2);
}
\foreach \i in {1,...,6} { 
	\pgfmathsetmacro{\kot}{360 * \i / 7}
	\coordinate (c\i) at (\kot:2.1);
	\node (d\i_1) at ($ (c\i) + (\kot:0.25) $) {};
	\node (d\i_2) at ($ (c\i) + (\kot:-0.25) $) {};
	\node (d\i_3) at ($ (c\i) + ({\kot + 90}:0.5) $) {};
	\node (d\i_4) at ($ (c\i) + ({\kot + 90}:-0.5) $) {};
	\path[edge] (d\i_1) -- (d\i_2) -- (d\i_3) -- (d\i_1) -- (d\i_4) -- (d\i_2);
}
\node[draw=none,fill=none] at (-2.4, 0.45) {$u_1$};
\node[draw=none,fill=none] at (-2.4, -0.4) {$u_2$};
\path[edge] (a1_3) -- (a2_4);
\path[edge] (a2_3) -- (a3_4);
\path[edge] (a3_3) -- (a4_4);
\path[edge] (a4_3) -- (a5_4);
\path[edge] (a5_3) -- (a6_4);
\path[edge] (d1_3) -- (d2_4);
\path[edge] (d2_3) -- (d3_4);
\path[edge] (d3_3) -- (d4_4);
\path[edge] (d4_3) -- (d5_4);
\path[edge] (d5_3) -- (d6_4);
\node (b1) at ($ (0:2.5) + (90:0.35) $) {};
\node (b2) at ($ (0:2.5) + (90:-0.35) $) {};
\node[draw=none,fill=none] at ($(b1) + (-.25, -0.1)$) {$z_1$};
\node[draw=none,fill=none] at ($(b2) + (-.26, 0.1)$) {$z_2$};
\node (b3) at ($ (b1) + (-.6, 0.35) $) {};
\node (b4) at ($ (b2) + (-.6, -0.35) $) {};
\path[edge] (b3) -- (b1) -- (b2) -- (b4);
\path[edge,color=red] (a1_4) -- (b3) -- (d1_4);
\path[edge,color=red] (a6_3) -- (b4) -- (d6_3);
\node (c1) at ($ (b1) + (0.5, 0) $) {};
\node (c1a) at ($ (c1) + (0.5, 0) $) {};
\node[label=0:$v_1$]  (c1b) at ($ (c1a) + (0.5, 0) $) {};
\node (c2) at ($ (b2) + (0.5, 0) $) {};
\node (c2a) at ($ (c2) + (0.5, 0) $) {};
\node[label=0:$v_2$] (c2b) at ($ (c2a) + (0.5, 0) $) {};
\path[edge] (b1) -- (c1) -- (c1a) -- (c1b);
\path[edge] (b2) -- (c2) -- (c2a) -- (c2b);
\path[edge] (c1) -- (c2a);
\path[edge] (c2) -- (c1a);
\end{tikzpicture}
\caption{The graph $G_{3,2}$.
Edges of binary trees of depth $1$ are red.}
\label{fig:gtrgraph}
\end{figure}
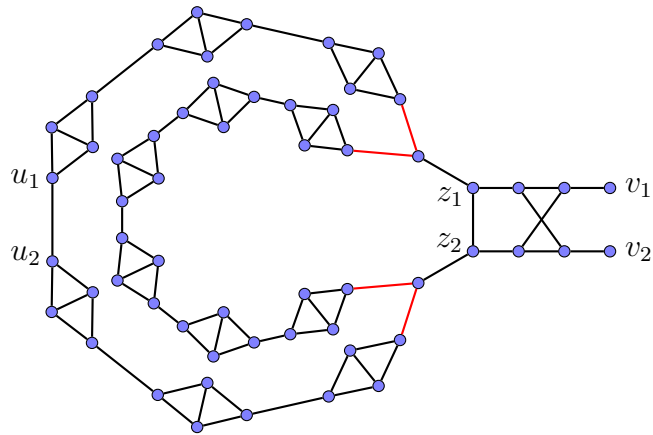

Denote by $G_{t,r}$ the resulting graph, see Figure~{\ref{fig:gtrgraph}} for $G_{3,2}$.
Then $G_{t,r}$ has $8t2^{r-1}+2(2^{r-1}-1)+8$ vertices.
Moreover, all central vertices of $2^{r-1}$ chains of diamonds belong to the same orbit of $G_{t,r}$.
Observe that there are $2^r$ such vertices.
Let $u_1$ and $u_2$ be central vertices of one of the chains of diamonds.
If we show that $\lim_{t\to\infty}(W(G_{t,r}-u_1)-W(G_{t,r}))=\infty$, we can complete $G_{t,r}$ analogously as $G_t$ was completed to $H$ in the proof of Theorem~{\ref{thm:2}}, to obtain a cubic $2-$connected graph with at least $2^r$ Šolt{\'e}s vertices.

Thus, it remains to show that $W(G_{t,r}-u_1)-W(G_{t,r})$ tends to infinity as $t\to\infty$.
Observe that $W(G_{t,r}-u_1)-W(G_{t,r})$ equals
$$
\sum_{x,y\in V(G_{t,r})\setminus\{u_1\}} \Big(d_{G_{t,r}-u_1}(x,y)-d_{G_{t,r}}(x,y)\Big)-w_{G_{t,r}}(u_1).
$$

We first estimate $w_{G_{t,r}}(u_1)$ from above.
For small $i$, there are at most $4$ vertices at distance $i$ from $u_1$.
For bigger $i$ the amount of vertices at distance $i$ grows, but it cannot exceed $4\cdot2^r+8$ since there are $2^{r-1}$ chains attached to $F$ and $F$ itself has $8$ vertices.
Thus, $w_{G_{t,r}}(u_1)\le (2^{r+2}+8)\sum_{i=1}^{1+3t+2r+3t+3}i$.
And if we held $r$ constant, $w_{G_{t,r}}(u_1)$ can be bounded from above by a quadratic polynomial in $t$.

Now we estimate $\sum_{x,y\in V(G_{t,r})\setminus\{u_1\}} \big(d_{G_{t,r}-u_1}(x,y)-d_{G_{t,r}}(x,y)\big)$ from below.
For every $x,y\in V(G_{t,r})\setminus\{u_1\}$ we have $d_{G_{t,r}-u_1}(x,y)\ge d_{G_{t,r}}(x,y)$, since $G_{t,r}$ has all paths which exist in $G_{t,r}-u_1$.
However, it suffices to consider only $x,y$ being in the same chain of diamonds as $u_1$.
Observe that the distance from $u_2$ to a neighbour of $u_1$ ($\ne u_2$) is $2$ in $G_{t,r}$, but it is at least $6t$ in $G_{t,r}-u_1$ (with equality if $r=1$, i.e. if $G_{t,r}=G_t$).
So this distance is increased at least by $6t-2$.
The distance from $u_2$ to the second neighbour of $u_1$ is increased at least by $6t-4$, etc.
However, we should consider also a neighbour of $u_2$ ($\ne u_1$).
For this vertex the distances are increased at least by $6t-4$, $6t-6$, \dots
Summing up,
$$
D\ge\sum_{j=1}^{3t-1}\sum_{i=1}^j 2i=2\sum_{j=1}^{3t-1}\binom{j+1}2=2\binom{3t+1}3.
$$

Consequently, $W(G_{t,r}-u_1)-W(G_{t,r})$ is bounded from below by a cubic polynomial (in $t$) with leading coefficient $9$.
Thus, $\lim_{t\to\infty}(W(G_{t,r}-u_1)-W(G_{t,r}))=\infty$ as required.
\end{proof}

\section{Concluding remarks and further work}
\label{sec:concluding}

We believe that if there exists another Šolt{\'e}s graph in addition to $C_{11}$, it is likely to be vertex-transitive or has a low number of vertex orbits.
Vertices of the same orbit are either all Šolt{\'e}s vertices or none of them is.

Holt and Royle \cite{HoltCensus} have constructed a census of all vertex-transitive graphs with less
than $48$ vertices; these graphs can be obtained from their Zenodo repository \cite{Zenodo} in the \texttt{graph6} format \cite{graph6}. 
The repository contains $100\,720\,391$ graphs in total, $100\,716\,591$ of which are connected \cite{A006799}. 
The computer search revealed that the only Šolt{\'e}s graph among them is the well-known
$C_{11}$.

We also examined the census of cubic vertex-transitive graphs by Potočnik, Spiga and Verret
\cite{CVTCensus}. Their census contains all (connected) cubic vertex-transitive graphs on up to
$1280$ vertices; there are $111\,360$ such graphs. 
$\CVT(n, i)$ denotes the $i$-th graph of order $n$ in the census.
No Šolt{\'e}s graph has been found, but the search
revealed that there exist graphs that are $\frac{1}{3}$-Šolt{\'e}s, i.e.\ $\frac{1}{3}$ of all vertices
are Šolt{\'e}s vertices. We found $7$ cubic $\frac{1}{3}$-Šolt{\'e}s graphs; all of them are \emph{trunctations}
of certain cubic vertex-transitive graphs. In this paper, the truncation of a graph $G$ is denoted by $\Tr(G)$.
Note that the truncation of a vertex-transitive graph is not necessarily a vertex-transitive graph;
in the case of cubic graphs, there may be up to $3$ vertex orbits. 
When doing the computer search, we have to check the Šolt{\'e}s property for
one vertex from each orbit only. Here is the list of cubic vertex-transitive graphs $G$,
such that $\Tr(G)$ is a $\frac{1}{3}$-Šolt{\'e}s graph:
\begin{align*}
 & \CVT(384, 805), &
 & \CVT(600, 259), &
 & \CVT(768, 3650), &
 & \CVT(1000, 302), \\
 & \CVT(1056, 538), &
 & \CVT(1056, 511), &
 & \CVT(1280, 967).
\end{align*}
Interestingly, all these graphs are \emph{Cayley graphs}. Several properties of these graphs are listed
in the Appendix. The graph $\CVT(768, 3650)$ is the only non-bipartite example, while the rest are bipartite.
Girths of these graph are values from the set $\{4, 6, 8, 10, 12\}$. We were able to identify seven such graphs.
However, we believe that there could exist many more.

\begin{problem}
Find an infinite family of cubic vertex-trainsitive graphs $\{G_i\}_{i=1}^\infty$, such that $\Tr(G_i)$ is 
a $\frac{1}{3}$-Šolt{\'e}s graph for all $i \geq 1$.
\end{problem}

Moreover, we also found an example of a $4$-regular $\frac{1}{3}$-Šolt{\'e}s graph, namely
the graph $L(\CVT(324, 104))$. It has order $486$ and is 
the \emph{line graph} of $\CVT(324, 104)$, which is a Cayley graph.
More data can be found in the Appendix.

Of course, $\frac{1}{3}$-Šolt{\'e}s is still a long way from being Šolt{\'e}s.
The next conjecture is additionally reinforced by the fact that there are no Šolt{\'e}s graphs among
vertex-transitive graphs with less than $48$ vertices.

\begin{conjecture}
The cycle on eleven vertices, $C_{11}$, is the only Šolt{\'e}s graph.
\end{conjecture}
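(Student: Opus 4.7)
The plan is a three-step reduction that chains through the three preceding conjectures. First, I would attempt to establish Conjecture~1, that every Šolt{\'e}s graph $G$ is regular. A promising line is to sum the Šolt{\'e}s identity $W(G)=W(G-v)$ over all $v\in V(G)$ and re-express the right-hand side using vertex transmissions, giving
\[
n\,W(G) \;=\; \sum_{v\in V(G)} W(G-v) \;=\; \sum_{\{x,y\}\subseteq V(G)} \sum_{v\notin\{x,y\}} d_{G-v}(x,y).
\]
Together with an averaging/convexity argument, one hopes to force all transmissions to coincide; passing from equal transmissions to regularity would then be handled by an ad hoc extremal argument based on the local structure near vertices of extremal degree.

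Second, once $G$ is known to be $r$-regular, the case $r=2$ is settled by direct calculation. A connected $2$-regular graph is a cycle $C_n$, and the identity $W(C_n)=W(P_{n-1})=\binom{n}{3}$ reduces to $6(n+1)=8(n-2)$ for $n$ odd, giving the unique solution $n=11$, and to $n^2-12n+8=0$ for $n$ even, which has no integer root. Thus $C_{11}$ is the unique cyclic example.

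Third, and this is where the proof becomes genuinely hard, one treats $r\geq 3$. Invoking Conjectures~2 and~3 we may assume $G$ is a Cayley graph with common transmission $\tau=2W(G)/n$, so the Šolt{\'e}s condition becomes
\[
\sum_{\{x,y\}\subseteq V(G)\setminus\{v\}} \bigl(d_{G-v}(x,y)-d_G(x,y)\bigr) \;=\; \tau \qquad \text{for every } v\in V(G).
\]
The strategy is to bound the stretch $d_{G-v}(x,y)-d_G(x,y)$ pair by pair using connectivity and girth: in an $r$-regular, $r$-connected graph with $r\geq 3$, for most pairs $(x,y)$ there are short alternative shortest $x$--$y$ paths avoiding any given $v$, so the aggregate stretch should be sublinear in $n$, whereas $\tau$ grows at least linearly with the diameter. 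Making this quantitative and then covering the remaining small-order Cayley graphs by extending the computational census beyond the bounds used in this section is what must meet in the middle.

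The principal obstacle is the quantification in the third step: producing a stretch bound strong enough, and uniform enough across all Cayley graphs of degree $\geq 3$, to beat $\tau$ for all sufficiently large orders. Existing distance estimates for Cayley graphs (e.g.\ via spectral gaps for expanders, or via growth bounds for groups of polynomial growth) give only partial control, so new ideas are likely required. A natural intermediate target would be to resolve the cubic case alone, where the restrictive structure of cubic Cayley graphs together with the truncation examples of Section~\ref{sec:concluding} could be exploited, and then to combine this with a finite computational check in the same spirit as the census-based searches already performed.
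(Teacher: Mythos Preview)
The statement you are attempting to prove is a \emph{conjecture}; the paper offers no proof of it, only computational evidence and the remark that no further Šolt{\'e}s graph was found among small vertex-transitive graphs and among truncations of cubic vertex-transitive graphs up to $1280$ vertices. There is therefore no ``paper's own proof'' to compare against, and what you have written is not a proof either but a speculative research programme with several serious gaps.

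Concretely: your first step proposes to deduce regularity by an averaging identity and ``convexity'', then to pass from equal transmissions to equal degrees. Neither move is justified. The summed identity $nW(G)=\sum_v W(G-v)$ holds for \emph{every} Šolt{\'e}s graph by definition and carries no new information; it does not by itself force the individual transmissions $w_G(v)$ to coincide. Even if it did, transmission-regular graphs need not be degree-regular, so the ``ad hoc extremal argument'' you allude to would have to overcome a genuine obstruction, not a technicality. Your third step then \emph{assumes} Conjectures~2 and~3 (vertex-transitivity, Cayley) without any argument for them, and you yourself concede that the required uniform stretch bound for $r\ge 3$ is beyond current techniques and that ``new ideas are likely required''. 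Only the $r=2$ computation is complete and correct; that part recovers the known fact that $C_{11}$ is the unique Šolt{\'e}s cycle, which is already in the literature. In short, the proposal identifies reasonable lines of attack but does not close any of them, and the paper does not claim to either.
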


\bigskip\noindent\textbf{Acknowledgments.}~The first author is supported in part by the Slovenian Research Agency 
(research program P1-0294 and research projects J1-1691, N1-0140, and J1-2481).
The second author acknowledges partial support by Slovak research grants VEGA 1/0567/22, VEGA 1/0206/20, 
APVV--19--0308, and APVV--22--0005.
The second and third authors acknowledge partial support of the Slovenian research
agency ARRS; program P1-0383 and ARRS project J1-3002.

\newpage
\section{Appendix}

There are $7$ cubic vertex-transitive graphs $G$ on up to $1280$ vertices,
such that $\Tr(G)$ is a $\frac{1}{3}$-Šolt{\'e}s graph. Since all these graph
are Cayley graphs, we give the generating set for the Cayley graph. 
Note that the group itself (its permutation representation) is given
 implicitly by these generators;
however, we also give the group's ID from GAP's library of small groups \cite{GAPsmall}.
$\SmallGroup(n, k)$ is the $k$-th group of order $n$ from that library.
We also calculated girth, diameter and tested all graphs
for bipartiteness. 

\begin{itemize}
\item
$\CVT(384, 805)$:
\begin{align*}
\Group([ 
& (2,4)(6,12,17,19,18,14)(7,10,20)(8,15,16,9,11,13), \\
& (1,4)(2,3)(5,13)(6,18)(7,17)(8,15)(9,12)(10,14)(11,19)(16,20) ]) \\
\MoveEqLeft \cong \SmallGroup(384, 5781)
\end{align*}
girth: 6;
diameter: 10;
bipartite? True

\item
$\CVT(600, 259)$:
\begin{align*}
\Group([
& (1,2)(3,4)(5,6)(7,9)(8,10)(13,14)(16,17), \\
& (1,3)(5,7)(6,8)(9,12)(10,13)(11,14)(15,16), \\
& (1,4)(2,3)(5,6)(8,11)(9,10)(13,14)(15,17) ]) \\
\MoveEqLeft \cong \SmallGroup(600, 103)
\end{align*}
girth: 10;
diameter: 13;
bipartite? True

\item
$\CVT(768, 3650)$:
\begin{align*}
\Group([
& (2,3)(4,7)(5,6)(10,12), \\
& (2,4)(3,5)(6,7)(9,10)(11,12), \\
& (1,2)(3,6)(4,8)(5,7) ]) \\
\MoveEqLeft \cong \SmallGroup(768, 1090104)
\end{align*}
girth: 8;
diameter: 16;
bipartite? False

\item
$\CVT(1000, 302)$:
\begin{align*}
\Group([
 & (4,5)(6,7)(9,14)(10,18)(11,21)(12,23)(13,25)(15,24)(16,27)(17,29) \\
 & \quad (19,26)(20,31)(22,28)(30,32), \\
 & (1,2)(3,4)(5,6)(8,9)(10,15)(11,19)(12,18)(13,21)(14,23)(16,17)(20,27) \\
 & \quad (22,29)(24,31)(25,32)(26,28), \\
 & (1,2)(8,17)(9,27)(10,11)(12,26)(13,24)(14,22)(15,18)(16,30)(20,32) \\
 & \quad (21,28)(23,31)(25,29) ]) \\
\MoveEqLeft \cong \SmallGroup(1000, 105)
\end{align*}
girth: 10;
diameter: 15;
bipartite? True

\item
$\CVT(1056, 538)$:
\begin{align*}
\Group([
 & (2,3)(4,5)(6,8)(7,10)(9,11)(13,14)(15,16)(17,18)(19,20)(21,22), \\
 & (1,2)(4,6)(7,9)(12,13)(14,15)(16,17)(18,19)(20,21), \\
 & (4,7)(6,9)(10,11)(12,13)(14,15)(16,17)(18,19)(20,21) ]) \\
\MoveEqLeft \cong \SmallGroup(1056, 493)
\end{align*}
girth: 4;
diameter: 22;
bipartite? True

\item
$\CVT(1056, 511)$:
\begin{align*}
\Group([
 & (2,3)(12,13)(14,15)(16,17)(18,19)(20,21), \\
 & (1,2)(4,5)(6,7)(8,9)(10,11)(12,14)(15,16)(17,18)(19,20)(21,22), \\
 & (5,6)(7,8)(9,10)(12,13)(14,15)(16,17)(18,19)(20,21) ]) \\
\MoveEqLeft \cong \SmallGroup(1056, 468)
\end{align*}
girth: 4;
diameter: 22;
bipartite? True

\item
$\CVT(1280, 967)$:
\begin{align*}
\Group([
 & (1,2)(3,5)(7,9)(8,11)(10,15)(12,19)(13,20)(14,22)(16,25)(17,27) \\ 
 & \quad (18,29)(21,32)(23,34)(24,30)(26,31)(28,37), \\
 & (1,3)(4,5)(6,7)(8,12)(9,13)(10,16)(11,17)(15,23)(18,30)(20,31) \\ 
 & \quad (21,27)(22,25)(24,28)(26,36)(29,33)(35,37), \\
 & (1,4)(2,3)(6,8)(7,10)(9,14)(11,18)(12,20)(13,21)(15,24)(16,26) \\ 
 & \quad (17,28)(19,29)(22,33)(23,35)(25,30)(27,36)(31,34)(32,37) ]) \\
\MoveEqLeft \cong \SmallGroup(1280, 81752)
\end{align*}
girth: 12;
diameter: 16;
bipartite? True
\end{itemize}

\noindent
There exists one cubic vertex-transitive graph $G$ on up to $1280$ vertices,
such that $L(G)$ is a $\frac{1}{3}$-Šolt{\'e}s graph.

\begin{itemize}
\item
$\CVT(324, 104)$:
\begin{align*}
\Group([
& (2,9)(3,4)(5,7)(6,8), \\
& (1,5)(2,6)(3,9)(4,7), \\
& (2,9)(3,6)(4,8) ]) \\
\MoveEqLeft \cong \SmallGroup(324, 39)
\end{align*}
girth: 4;
diameter: 12;
bipartite: True
\end{itemize}
\end{document}